\newcommand{\RR}{{\mathbb{R}}}
\newcommand{\NN}{{\mathbb{N}}}
\newcommand{\ZZ}{{\mathbb{Z}}}
\newcommand{\CC}{{\mathbb{C}}}
\newcommand{\TT}{{\mathbb{T}}}
\newcommand{\QQ}{{\mathbb{Q}}}
\newcommand{\cG}{{\mathcal{G}}}
\newcommand{\cP}{{\mathcal{P}}}
\newcommand{\cQ}{{\mathcal{Q}}}
\newcommand{\rank}{{\mathop{\rm rank}\nolimits\,}}
\newcommand{\diag}{{\mathop{\rm diag}\nolimits\,}}
\newcommand{\Span}{{\mathop{\rm span}\nolimits\,}}
\newtheorem{thm}{Theorem}
\newtheorem{lemma}[thm]{Lemma}
\newtheorem{corollary}[thm]{Corollary}
\newtheorem{problem}[thm]{Problem}
\newtheorem{proposition}[thm]{Proposition}
\newdefinition{remark}{Remark}
\newdefinition{definition}{Definition}
\newdefinition{example}{Example}
\newdefinition{algorithm}{Algorithm}
\newproof{proof}{Proof}
\begin{document}

\begin{frontmatter}
  
  \title{Prony's method in several
    variables: symbolic solutions by universal interpolation}

  \author{Tomas Sauer}
  \ead{Tomas.Sauer@uni-passau.de}
  \address{Lehrstuhl f\"ur Mathematik mit Schwerpunkt Digitale
    Bildverarbeitung \& FORWISS,
    Universit\"at Passau, Innstr. 43, D-94053
    Passau, Germany.}

\begin{abstract}
  The paper considers a symbolic approach to Prony's method in several
  variables and its close connection to multivariate polynomial
  interpolation. Based on the concept of universal interpolation that
  can be seen as a weak generalization of univariate Chebychev
  systems, we can give estimates on the minimal number of evaluations
  needed to solve Prony's problem.
\end{abstract}

\begin{keyword}
  Prony's method \sep Gr\"obner basis \sep degree reducing
  interpolation \sep hyperbolic cross

  \MSC 13P10, 65T40, 41D05
\end{keyword}

\end{frontmatter}

\section{Introduction}
Formulated in several variables, \emph{Prony's problem} consists of
reconstructing a function $f : \RR^s \to \CC$ of the form
\begin{equation}
  \label{eq:PronyFun}
  f(x) = \sum_{\omega \in \Omega} f_\omega e^{\omega^T x}, \qquad
  f_\omega \in \CC \setminus \{ 0 \}, \quad \omega \in \Omega \subset
  \left( \RR + i \TT \right)^s,
\end{equation}
from discrete samples,
where, as usual, $\TT$ stands for the \emph{torus} $\TT := \RR / 2 \pi
\ZZ$. The restriction of the imaginary part of the frequencies is
needed to avoid ambiguities in the solution.
In one variable, this problem and its solution date back to Prony
\citep{prony95:_essai} in 1795 and since then various numerical
methods have been 
devised to solve the problem, in particular the ESPRIT and MUSIC 
algorithms \citep{roy89:_esprit,schmidt86:_multip} from multi source
radar detection with extensions to higher dimensions on grids in
\citep{rouquette01:_estim,yilmazer06:_matrix}. 
One should also consider
\citep{potts15:_fast_esprit} 
for recent improvements and \citep{plonka14:_prony} for a survey on
Prony's method and its extensions and generalizations. The
\emph{matrix pencil} approach for the Hankel matrices has
also been considered in
\citep{hua90:_matrix_pencil_method_estim_param,hua92:_estim} in one and
two variables.

The closely related problem of reconstruction of sparse polynomials in
several variables has also been considered by methods other than
Prony's approach. Probabilistic methods with a small expected number
of evaluations can be found in \citep{zippel79:_probab}, see also
\citep{zippel90:_inter} and, more recently, in \citep{kaltofen16:_spars}.

The multivariate version of Prony's method has gained some popularity
recently and was approached by projection methods as in
\citep{diederichs15:_param_estim_bivar_expon_sums,potts13:_param}, as
well as by more or less direct multivariate attempts
\citep{kunis16:_prony}. This
paper is an extension of \citep{Sauer15S}, where the algebraic nature of
the multivariate problem has been pointed out, resulting in a fast
numerical method based on orthogonal H--bases.

Let us begin with a slightly informal presentation of the algebraic
structure underlying Prony's problem in several variables:
the approach consist of considering
finite parts of the infinite \emph{Hankel matrix}
\begin{equation}
  \label{eq:InfiniteHankel}
  F := \left[ f(\alpha+\beta) : \alpha,\beta \in \NN_0^s \right],  
\end{equation}
for example
\begin{equation}
  \label{eq:FnDef}
  F_n := \left[ f(\alpha+\beta) : \alpha,\beta \in \Gamma_n \right],
  \qquad \Gamma_n := \left\{ \alpha \in \NN_0^s : |\alpha| \le n \right\},
\end{equation}
with the standard \emph{length} $|\alpha| = \alpha_1 + \cdots +
\alpha_s$ of a \emph{multiindex} $\alpha \in \NN_0^s$. The crucial
observation is that the \emph{Prony ideal} $I_\Omega$, the set of all
polynomials vanishing on
\[
X_\Omega = e^\Omega = \left\{ x_\omega = e^\omega = \left(
    e^{\omega_1},\dots,e^{\omega_s} \right) : \omega \in \Omega
\right\},
\]
is in one-to-one correspondence with the kernels of the matrices $F_n$. More
precisely, if we denote by $\Pi_n$ the vector space of all polynomials
of \emph{total degree} at most $n$, and identify an polynomial
\[
\Pi_n \ni p(x) = \sum_{|\alpha| \le n} p_\alpha \, x^\alpha
\]
with its coefficient vector $p = \left[ p_\alpha : \alpha \in \Gamma_n
\right] \in \CC^{\Gamma_n}$, then the following result from
\citep{Sauer15S} holds true.

\begin{thm}\label{T:FnKernel}
  For sufficiently large $n$ we have that $p \in I_\Omega \cap \Pi_n$
  if and only if $F_n p = 0$.
\end{thm}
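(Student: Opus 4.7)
The plan is to exploit the Vandermonde structure hidden inside $F_n$. Writing out the entries using \eqref{eq:PronyFun} gives
\[
f(\alpha+\beta) = \sum_{\omega \in \Omega} f_\omega \, e^{\omega^T\alpha} e^{\omega^T\beta} = \sum_{\omega \in \Omega} f_\omega \, x_\omega^\alpha \, x_\omega^\beta,
\]
so one can factor $F_n = V_n D V_n^T$, where $V_n = \bigl[ x_\omega^\alpha : \alpha \in \Gamma_n, \, \omega \in \Omega \bigr]$ is a multivariate Vandermonde matrix with rows indexed by $\Gamma_n$ and columns by $\Omega$, and $D = \diag(f_\omega : \omega \in \Omega)$ is a diagonal matrix that is invertible because all $f_\omega \neq 0$.

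Next I would interpret $V_n^T p$ as the evaluation vector: for any $p \in \Pi_n$ with coefficient vector $p = (p_\alpha)_{\alpha \in \Gamma_n}$, the $\omega$-th entry of $V_n^T p$ equals $\sum_{\alpha} p_\alpha x_\omega^\alpha = p(x_\omega)$. Consequently, $F_n p = V_n D \bigl( p(x_\omega) \bigr)_{\omega \in \Omega}$. The ``only if'' direction is then immediate: if $p$ vanishes on $X_\Omega$, then $V_n^T p = 0$ and hence $F_n p = 0$, for every $n \geq \deg p$.

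The core of the argument is the converse, and it is also the main obstacle. One has to show that for $n$ sufficiently large the matrix $V_n$ has full column rank $|\Omega|$, equivalently, that the evaluation map $\Pi_n \to \CC^{X_\Omega}$, $p \mapsto (p(x_\omega))_\omega$, is surjective. The hypothesis $\omega \in (\RR + i\TT)^s$ guarantees that the map $\omega \mapsto x_\omega = (e^{\omega_1}, \dots, e^{\omega_s})$ is injective on $\Omega$, so the points $x_\omega$ are pairwise distinct. For each pair of distinct points in $\CC^s$ one can write down a separating affine polynomial, and by taking suitable products one constructs, for every $\omega \in \Omega$, a polynomial $q_\omega$ of some fixed degree $\leq |\Omega| - 1$ with $q_\omega(x_{\omega'}) = \delta_{\omega, \omega'}$. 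Hence for $n \geq |\Omega| - 1$ the evaluation map is surjective and $V_n$ has full column rank.

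With full column rank of $V_n$ established, $F_n p = V_n \bigl( D V_n^T p \bigr) = 0$ forces $D V_n^T p = 0$, and invertibility of $D$ yields $p(x_\omega) = 0$ for all $\omega \in \Omega$, i.e., $p \in I_\Omega$. Combined with the ``only if'' direction, this proves the claimed equivalence for every $n \geq |\Omega| - 1$, which makes precise what ``sufficiently large'' means in the statement.
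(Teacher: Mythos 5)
Your proof is correct and follows essentially the same route as the paper: factor $F_n$ through the Vandermonde matrix $V(X_\Omega,\Gamma_n)$ and the invertible diagonal $F_\Omega$ (this is exactly the paper's identity \eqref{eq:FABFactor}, and the ``Prony trick'' used to prove Theorem~\ref{T:PronyIdeal} is the same computation), then observe that the equivalence reduces to $V(X_\Omega,\Gamma_n)$ having full column rank $\#\Omega$. The only difference is how you pin down ``sufficiently large'': you construct explicit Lagrange polynomials as products of separating linear factors, giving the crude but always-valid threshold $n\ge\#\Omega-1$, whereas the paper identifies the sharp threshold as the degree of a degree-reducing interpolation space for $X_\Omega$ (which is bounded above by $\#\Omega-1$, so your bound implies the claimed statement).
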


\noindent
The ``sufficiently large'' in Theorem~\ref{T:FnKernel} can be made
concrete: it is the degree of some degree reducing interpolation space
for $X_\Omega$ and since all degree reducing interpolation spaces have
the same degree, this is indeed a \emph{geometric} quantity depending
on $X_\Omega$ only.
The algorithm developed in \citep{Sauer15S} relies on the a
priori knowledge of such a sufficiently large $n$ and then
successively builds the matrices
\[
F_{n,k} := \left[ f(\alpha+\beta) :
  \begin{array}{c}
    \alpha \in \Gamma_n \\ \beta \in \Gamma_k 
  \end{array}
\right], \qquad k=0,\dots,n,
\]
for which the following observation has been made in \citep{Sauer15S}.

\begin{thm}\label{T:HilbertFun}
  For sufficiently large $n$, we have that
  \begin{enumerate}
  \item $\ker F_{n,k} \simeq I_\Omega \cap \Pi_k$, $k = 0,\dots,n$,
  \item $k \mapsto \rank F_{n,k}$ computes the \emph{affine Hilbert
      function} for the ideal $I_\Omega$,
  \item there exists a number $0 \le m \le n$ such that
    \[
    \rank F_{n,0} < \cdots < \rank F_{n,m-1} = \rank F_{n,m} = \cdots
    = \rank F_{n,n} = \rank F_n,
    \]
    and this $m$ is the minimal choice for $n$.
  \end{enumerate}
\end{thm}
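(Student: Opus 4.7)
The plan is to factor $F_{n,k}$ through evaluation on $X_\Omega$ and then read off each assertion from dimensional bookkeeping. Expanding $f(\alpha+\beta)$ via (\ref{eq:PronyFun}) and setting $x_\omega = e^\omega$, a direct computation gives, for every $p \in \Pi_k$,
\[
(F_{n,k}\,p)_\alpha \;=\; \sum_{\beta \in \Gamma_k} p_\beta\, f(\alpha+\beta) \;=\; \sum_{\omega \in \Omega} f_\omega\, x_\omega^\alpha\, p(x_\omega), \qquad \alpha \in \Gamma_n,
\]
so that $F_{n,k} = \Vb_n\,\Db\,\Vb_k^T$ with the rectangular Vandermonde matrices $\Vb_j = [x_\omega^\alpha : \alpha \in \Gamma_j,\,\omega \in \Omega]$ and the invertible diagonal matrix $\Db = \diag(f_\omega : \omega \in \Omega)$. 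For $n$ at the same ``sufficiently large'' threshold used in Theorem~\ref{T:FnKernel}, i.e.\ large enough that $\Pi_n$ contains a degree reducing interpolation space for $X_\Omega$, the matrix $\Vb_n$ has full column rank $|\Omega|$. Since $\Db$ is invertible, $F_{n,k}\,p = 0$ is then equivalent to $p(x_\omega)=0$ for every $\omega\in\Omega$, that is, to $p \in I_\Omega \cap \Pi_k$, which proves (i) for every $k \le n$.

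Part (ii) is an immediate consequence of (i) via the rank--nullity theorem: viewing $F_{n,k}$ as a linear map $\CC^{\Gamma_k} \to \CC^{\Gamma_n}$, one obtains
\[
\rank F_{n,k} \;=\; \dim \Pi_k - \dim(I_\Omega \cap \Pi_k),
\]
and the right hand side is by definition the affine Hilbert function $H_{I_\Omega}(k)$ of the zero-dimensional ideal $I_\Omega$.

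For (iii) I would invoke the standard structure of the affine Hilbert function of a zero-dimensional ideal: $H_{I_\Omega}$ is non-decreasing in $k$, stabilises at $\dim \CC[x]/I_\Omega = |X_\Omega|$, and remains constant from the first index where it fails to strictly increase. The last point is the only non-trivial ingredient; I would prove it by the usual inductive argument: from $\Pi_{k+1} \subseteq \Pi_k + I_\Omega$, multiplying any monomial of degree $k+1$ by a coordinate and reducing modulo $I_\Omega$ back into $\Pi_k$ shows $\Pi_{k+2} \subseteq \Pi_k + I_\Omega$, and iterating yields $\Pi_j \subseteq \Pi_k + I_\Omega$ for every $j \ge k$. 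Combined with the identity $\rank F_n = |X_\Omega|$ and the observation that $\Vb_n$ acquires full column rank precisely when $H_{I_\Omega}(n) = |X_\Omega|$, the smallest index $m$ with $\rank F_{n,m} = \rank F_n$ is simultaneously the stabilisation point of $H_{I_\Omega}$ and the minimal $n$ for which Theorem~\ref{T:FnKernel} and parts (i)--(ii) apply. The main delicate step is the strict increase below $m$; once the Vandermonde factorisation above is in hand, everything else is dimensional bookkeeping.
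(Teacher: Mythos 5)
The paper does not actually reprove Theorem~\ref{T:HilbertFun}: it is quoted from \citep{Sauer15S}, so there is no in-paper proof to compare against. Your argument is nonetheless correct and uses exactly the machinery the paper develops independently in later sections: the factorization $F_{n,k}=\Vb_n\Db\Vb_k^T$ is \eqref{eq:FABFactor} specialised to $A=\Gamma_n$, $B=\Gamma_k$, and your proof of part (i) reproduces the ``Prony trick'' calculation used for Theorem~\ref{T:PronyIdeal} and, more precisely, Corollary~\ref{C:PronyIdeal}, which is part (i) restated. Parts (ii) and (iii) then follow by rank--nullity and the standard structure of the affine Hilbert function of a radical zero-dimensional ideal; your inductive multiplication argument for why the Hilbert function, once it fails to strictly increase, is constant thereafter is the right (and necessary) ingredient. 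One small cosmetic point: as printed, the paper's chain has $\rank F_{n,m-1}=\rank F_{n,m}=\cdots=\rank F_n$, so the rank already attains its final value at index $m-1$, yet $m$ is called the minimal admissible $n$; you instead identify the minimal $n$ with the stabilisation index itself. That off-by-one lives in the statement rather than in your argument and does not affect the substance of the proof.
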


\noindent
Based on these structural observations, an algorithm can be derived
that computes an orthogonal H--basis in the sense of \citep{Sauer01} as
well as a graded homogeneous basis for the interpolation space $\Pi /
I_\Omega$ entirely by application of standard techniques from
Numerical Linear Algebra, namely singular valued decompositions and
$QR$ factorizations. This allows for fast and accurate solutions of even
high dimensional problems in floating point arithmetic.

Once the H--basis and the interpolation
space are determined, it is a fairly standard approach, see
\citep{GonzalesRouillierRoyTrujillo99,Stetter95} to determine the 
\emph{multiplication tables}, a set of $s$ commuting matrices of size
$\# \Omega \times \# \Omega$ whose eigenvalues are the components of
the $x_\omega$ that can be related by the respective eigenvectors, see
\citep{moeller01:_multiv}.

This paper takes a somewhat different approach to Prony's problem by
considering interpolation spaces
spanned by a minimal number of monomials, see
\citep{deBoor07,Sauer97a}. While orthogonal H--bases are more favorable
from a numerical point of view and work well in a numerical
environment, the underlying methods from Numerical Linear Algebra, in
particular orthogonal factorizations like the $QR$ decomposition of
matrices, cause difficulties in a symbolic framework due to the
occurrence of square roots. In contrast to those numerical methods,
this paper studies a \emph{symbolic} approach
that will
provide us with a \emph{minimal} sampling set for
Prony's method and tell us what an (asymptotically) minimal number of
evaluations of $f$ needed for the reconstruction. In doing so, we will
also gain some further inside into the algebraic structure of Prony's
problem.

The paper is organized as follows. In Section~\ref{sec:PronyRevis} the
notation will be fixed and Prony's problem will be expressed in terms
of degree reducing interpolation. In Section~\ref{sec:UniInter} we
study the fundamental algebraic tool, namely \emph{uniform
  interpolation}. This means the identification of spaces that permit
interpolation at any subset of $\CC^s$ of a given cardinality. Based
on this concept, Section~\ref{sec:MinRecovery} points out how to
solve Prony's problem with a minimal number of evaluations. Detailed
symbolic algorithms for that purpose are developed in
Section~\ref{sec:SymbolAlgo}. Finally, Section~\ref{sec:SparsePoly}
briefly points out the connection to sparse polynomials and how those
can be determined symbolically and an appendix in
Section~\ref{sec:Appendix} provides two valuable tools from
computational ideal theory together with proofs.

All results presented in this paper are of algebraic nature. Of
course, the numerical stability of the methods and of the
reconstruction in general depends on the conditioning of Vandermonde
matrices. This important and valuable question, however, is not in the
scope of this paper.

\section{Prony's problem revisited}
\label{sec:PronyRevis}
Let $\Pi = \CC [x_1,\dots,x_s]$ denote the algebra of polynomials in
$s$ variables with complex coefficients. We consider the
\emph{nonnegative integer grid}
$\Gamma = \NN_0^s$. For a finite $A \subset
\Gamma$ we define
\[
\Pi_A := \left\{ p(x) = \sum_{\alpha \in A} p_\alpha \, x^\alpha :
  p_\alpha \in \CC \right\}
\]
as the space of polynomials \emph{supported} on $A$, a finite
dimensional subspace of $\Pi$ of dimension $\# A$. Recall that
the \emph{total degree} of a polynomial $p \in \Pi$
is defined as
\[
\deg p := \max \{ |\alpha| : p_\alpha \neq 0 \}.
\]
In the important
case $A = \Gamma_n = \{ \alpha : |\alpha| \le n \}$, we use the common
abbreviation $\Pi_n := \Pi_{\Gamma_n}$ for the vector space of all
polynomials of total degree (at most) $n$. In an analogous way, we define
$\Gamma_n^0 = \{ \alpha : |\alpha | = n \}$ as the set of
\emph{homogeneous} multiindices of length $n$ and $\Pi_n^0 :=
\Pi_{\Gamma_n^0}$ as the \emph{homogeneous} polynomials of degree
(exactly) $n$.

The \emph{coefficients}
$p_\alpha$ of $p \in \Pi_A$ can be conveniently arranged into a vector
$p = ( p_\alpha : \alpha \in A ) \in \CC^A$; we use the same symbol
for the polynomial and the vector, the respective meaning will be clear
from the context. 
The next notion is standard in (polynomial)
interpolation theory.

\begin{definition}\label{D:Vandermonde}
  For finite sets $A \subset \Gamma$ and $X \subset \CC^s$ we define
  the \emph{Vandermonde matrix} $V(X,A)$ as
  \[
  V(X,A) := \left[ x^\alpha :
    \begin{array}{c}
      x \in X \\ \alpha \in A
    \end{array}
  \right].
  \]
\end{definition}

\noindent
Vandermonde matrices play a fundamental role in Prony's method as the
following simple computation shows: for $A,B \subset \Gamma$ we
define the Hankel matrix
\begin{equation}
  \label{eq:FABDef}
  F_{A,B} := \left[ f (\alpha+\beta) :
    \begin{array}{c}
      \alpha \in A \\ \beta \in B
    \end{array}
  \right].
\end{equation}
Using the unit vectors $e_\alpha := ( \delta_{\alpha,\alpha'} : \alpha'
\in A ) \in \CC^A$, we find that
\begin{eqnarray*}
  ( F_{A,B} )_{\alpha,\beta}  = e_\alpha^T F_{A,B} e_\beta
  & = & \sum_{\omega \in \Omega} f_\omega \,
  e^{\omega^T ( \alpha + \beta )} = \sum_{\omega \in \Omega} f_\omega \,
  e^{\omega^T \alpha} e^{\omega^T \beta} \\
  & = & \sum_{\omega \in \Omega} e_\alpha ^T V ( X_\Omega,A )^T
  e_\omega \, f_\omega \, e_\omega^T V ( X_\Omega,B ) e_\beta,
\end{eqnarray*}
which yields the well--known factorization
\begin{equation}
  \label{eq:FABFactor}
  F_{A,B} = V(X_\Omega,A)^T \, F_\Omega \, V (X_\Omega,B), \qquad
  F_\Omega := \diag ( f_\omega : \omega \in \Omega ),
\end{equation}
already used in the univariate ESPRIT method
\citep{roy89:_esprit}. Since, by assumption \eqref{eq:PronyFun},
$f_\omega \neq 0$, $\omega \in \Omega$, the rank of $F_{A,B}$ is
at most $\# \Omega$ with equality if and only if
\begin{equation}
  \label{eq:rankFCond}
  \rank V(X_\Omega,A) = \rank V(X_\Omega,B) = \# \Omega.  
\end{equation}
The meaning of \eqref{eq:rankFCond} is well--known: $\Pi_A$ and
$\Pi_B$ have to be interpolation spaces for $X_\Omega$.

\begin{definition}\label{D:InterpolSpace}
  A subspace $\cP$ of $\Pi$ is called an \emph{interpolation space}
  for $X$ if for any $y \in \CC^X$ there exists (at
  least one) $p \in \cP$ such that $p(X) = y$.
\end{definition}

\noindent
Despite of its simple derivation, \eqref{eq:rankFCond} has an
immediate important consequence for Prony's problem and the
reconstruction of $\Omega$ from $F_{A,B}$

\begin{thm}\label{T:ReconTheo}
  The coefficients $f_\omega$ can be reconstructed from $F_{A,B}$ if
  and only if $\Pi_A$ and $\Pi_B$ are interpolation spaces with
  respect to $X_\Omega$.
\end{thm}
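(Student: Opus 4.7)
The plan is to base the proof on the factorization \eqref{eq:FABFactor} together with the rank criterion \eqref{eq:rankFCond} already noted in the excerpt. Since $F_\Omega$ is a diagonal matrix with non-zero entries, $\rank F_{A,B} = \#\Omega$ if and only if both $V(X_\Omega,A)$ and $V(X_\Omega,B)$ have full row rank $\#\Omega$, and by Definition~\ref{D:InterpolSpace} the latter is equivalent to $\Pi_A$ and $\Pi_B$ being interpolation spaces for $X_\Omega$. The theorem thus becomes a reconstructibility-versus-rank statement that can be attacked in two separate directions.

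For the sufficiency direction I would proceed constructively. Under the interpolation hypothesis the right null space of $F_{A,B}$ coincides, via \eqref{eq:FABFactor}, with $\ker V(X_\Omega,B) \cong I_\Omega \cap \Pi_B$, because $V(X_\Omega,A)^T F_\Omega$ is injective. In the spirit of Theorem~\ref{T:FnKernel} one extracts $X_\Omega$ as the common zero set of these kernel polynomials. With $X_\Omega$ in hand, the diagonal entries of $F_\Omega$ are given symbolically by
\[
F_\Omega \;=\; \bigl(V(X_\Omega,A)^T\bigr)^{+}\, F_{A,B}\, V(X_\Omega,B)^{+},
\]
a formula that is well posed precisely because the two one-sided inverses exist under the hypothesis that both $\Pi_A$ and $\Pi_B$ are interpolation spaces.

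For the necessity direction I would argue by contrapositive. If, say, $\Pi_A$ fails to be an interpolation space for $X_\Omega$, then $V(X_\Omega,A)$ has rank $r < \#\Omega$, so $\rank F_{A,B}\le r < \#\Omega$. Any attempt to read $F_{A,B}$ as a Prony-type Hankel matrix must then fit through a rank-$r$ factorization, forcing either a strictly smaller support $\Omega'$ or different coefficients $(f'_\omega)$ that produce the same $F_{A,B}$. In either case the $f_\omega$'s are not uniquely recoverable.

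The step I expect to be the main obstacle is the necessity direction. The rank drop in $V(X_\Omega,A)$ itself is elementary, but converting it into a genuine ambiguity requires producing an alternative parameter set $(\Omega',(f'_\omega))$ that respects the original side conditions of \eqref{eq:PronyFun} (imaginary parts in $\TT^s$, all coefficients non-zero) and reproduces exactly the given $F_{A,B}$. Showing that such a bona fide competitor always exists whenever the Vandermonde rank drops below $\#\Omega$ — rather than being merely an algebraic artefact of a rank factorisation — is the delicate point of the argument.
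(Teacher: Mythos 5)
Your sufficiency direction matches the paper's: under the interpolation hypothesis, $V(X_\Omega,A)$ and $V(X_\Omega,B)$ admit one-sided inverses (the paper constructs a right inverse explicitly from the Lagrange coefficient vectors, you invoke the pseudoinverse), and $F_\Omega$ is then read off by sandwiching $F_{A,B}$ between them, exactly as in your displayed formula. The small detour you take through the kernel of $F_{A,B}$ and ``extracting $X_\Omega$'' is unnecessary here: the theorem is about recovering the coefficients $f_\omega$ \emph{given} $\Omega$, so $X_\Omega$ is already in hand and both Vandermonde matrices are known objects.

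The obstacle you flag in the necessity direction is, in fact, where your proposal has a real gap, and the paper dispatches it in one line that your sketch does not supply. You correctly reduce to: if $\rank V(X_\Omega,B) < \#\Omega$, then $\rank F_{A,B} < \#\Omega$, and you assert that this ``forces'' ambiguity — but you leave open how to exhibit a genuine competitor, and you explicitly call it the delicate point. The paper's move is direct: the rank drop means the rows of $V(X_\Omega,B)$ are linearly dependent, so there is a nonzero vector $c \in \CC^\Omega$ with $c^T V(X_\Omega,B) = 0$; setting $F := \diag(c)$ gives a nonzero diagonal matrix with $F\,V(X_\Omega,B) = 0$, whence $V(X_\Omega,A)^T(F_\Omega + F)V(X_\Omega,B) = F_{A,B}$. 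Thus the map $F_\Omega \mapsto F_{A,B}$ (with $\Omega$ fixed) is not injective and the coefficients cannot be recovered — no appeal to rank factorizations or to a hypothetical alternative support $\Omega'$ is needed. Your worry about respecting the side conditions of \eqref{eq:PronyFun} (all coefficients nonzero) also dissolves: replacing $F$ by $tF$ for a generic scalar $t$ keeps every diagonal entry of $F_\Omega + tF$ nonzero, since only finitely many $t$ make any single entry vanish. So the competitor is not an ``algebraic artefact''; it is an explicit, legitimate alternative coefficient vector. Your proposal identifies the right mechanism but does not close it; the paper's construction is the missing step.
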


\begin{proof}
  Suppose that $\Pi_A$ and $\Pi_B$ are interpolation spaces with
  respect to $X_\Omega$, then $\# A \ge \# \Omega$, and there exist
  coefficient vectors $p_\omega = ( p_{\omega,\alpha} : \alpha \in A
  )$ such that for $\omega' \in \Omega$
  \[
  \delta_{\omega,\omega'} = p_\omega ( x_{\omega'} ) = \sum_{\alpha
    \in A} p_{\omega,\alpha} \, x_{\omega'}^\alpha,
  \]
  hence
  \[
  V (X_\Omega,A) \, \left[ p_{\omega,\alpha} :
    \begin{array}{c}
      \alpha \in A \\ \omega \in \Omega
    \end{array}
  \right] = I_{\# \Omega}.
  \]
  In other words, this matrix is a right inverse of $V (X_\Omega,A)$
  which we call $V (X_\Omega,A)^{-1}$ and since the same holds for $V
  (X_\Omega,B)$, it follows that
  \[
  V (X_\Omega,A)^{-T} \, F_{A,B}  V (X_\Omega,B)^{-1} = F_\Omega,
  \]
  which reconstructs the coefficients under the assumption that
  $\Pi_A$ and $\Pi_B$ are interpolation spaces for $X_\Omega$.

  Conversely, if $\rank V
  (X_\Omega,B) < \# \Omega$, then there exists a nonzero diagonal
  matrix $F \in \CC^{\Omega \times \Omega}$ such that $F \, V
  (X_\Omega,B) = 0$ and therefore
  \[
  F_{A,B} = V(X_\Omega,A)^T \, ( F_\Omega + F ) \, V (X_\Omega,B)
  \]
  so that $F_\Omega$ cannot be reconstructed from $F_{A,B}$. An analogous
  argument can also be used in the case that $\rank V (X_\Omega,A) <
  \# \Omega$.
\end{proof}

\begin{corollary}\label{C:PronyAB}
  Any sampling sets $A,B$ for Prony's method must be chosen such that
  $\Pi_A$ and $\Pi_B$ are interpolation spaces for $X_\Omega$.
\end{corollary}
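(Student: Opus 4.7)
The plan is to derive the corollary directly from Theorem~\ref{T:ReconTheo} as its contrapositive, cast in the language of ``sampling sets for Prony's method''. Solving Prony's problem from the data $F_{A,B}$ ultimately requires the recovery of the coefficients $f_\omega$ as well as the frequency set $\Omega$ (equivalently $X_\Omega$). Since Theorem~\ref{T:ReconTheo} says the $f_\omega$ can be reconstructed from $F_{A,B}$ if and only if $\Pi_A$ and $\Pi_B$ are interpolation spaces for $X_\Omega$, the necessity of the interpolation property for the weaker task of only recovering the coefficients already forces the condition.

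To make this concrete, I would argue by contrapositive: suppose $\rank V(X_\Omega,B) < \#\Omega$. Then the converse direction of Theorem~\ref{T:ReconTheo} supplies a nonzero diagonal $F \in \CC^{\Omega\times\Omega}$ with $FV(X_\Omega,B)=0$, so that
\[
F_{A,B} = V(X_\Omega,A)^T \left( F_\Omega + c F \right) V(X_\Omega,B)
\]
for every scalar $c \in \CC$. Choosing $c$ such that $F_\Omega+cF$ has a zero diagonal entry at some $\omega^\star\in\Omega$ while keeping the remaining entries nonzero (a condition that excludes only finitely many $c$) reinterprets the very same data $F_{A,B}$ as a Prony representation supported on a strictly smaller frequency set $\Omega'\subsetneq\Omega$ with adjusted nonzero amplitudes. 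Hence already $\Omega$ itself is not determined by $F_{A,B}$, let alone the $f_\omega$. The situation for $A$ is symmetric via transposition of the factorization \eqref{eq:FABFactor}.

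There is no genuine obstacle in the argument, since the corollary is essentially a direct translation of Theorem~\ref{T:ReconTheo}. The only minor subtlety worth pointing out is that the alternative representation produced should itself be a valid Prony ansatz, i.e.\ have nonzero coefficients on its support, which is why the scalar $c$ above is chosen generically. This matches the spirit of the rest of the section, where the interpolation property of $\Pi_A$ and $\Pi_B$ is identified as the fundamental algebraic constraint governing admissible sampling in Prony's method.
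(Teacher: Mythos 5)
Your proposal is correct and follows the same route as the paper, which states the corollary with no separate proof precisely because it is an immediate consequence of the ``only if'' direction of Theorem~\ref{T:ReconTheo}: if the coefficients $f_\omega$ cannot be recovered, the sampling sets cannot serve Prony's method. The extra paragraph about perturbing $F_\Omega$ by $cF$ to exhibit a second Prony representation on a strictly smaller support $\Omega'\subsetneq\Omega$ is a pleasant strengthening (it shows the support, not just the amplitudes, is left ambiguous) but goes beyond what the paper asks of this corollary.
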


\begin{definition}
  A subspace $\cP$ of $\Pi$ is called a \emph{degree reducing
    interpolation space} for a finite set $X \subset \CC^s$ if for any
  $q \in \Pi$ there exists a \emph{unique} polynomial $p \in \cP$ such
  that $p(X) = q(X)$ and $\deg p(X) \le \deg q(X)$.
\end{definition}

\noindent
Degree reducing interpolation spaces for some set $X \subset \CC^s$ of
\emph{sites} have the advantage that they give
the ideal $I_X = \{ p \in \Pi : p(X) = 0 \}$ almost for free.
To be more concrete, let $A \subset \Gamma$ be
such that $\Pi_A$ is a degree reducing interpolation space for
$X$ and let $L_A : \Pi \to \Pi_A$ denote the \emph{interpolation
  operator} defined by
\begin{equation}
  \label{eq:InterpolOp}
  ( L_A p ) (X) = p(X)
\end{equation}
which is well defined because degree reducing interpolation is unique
by definition. It can then be shown that the polynomials
\begin{equation}
  \label{eq:IdealBasisDef}
  h_\alpha := (\cdot)^\alpha - L_A (\cdot)^\alpha, \qquad \alpha \in
  A^c := \Gamma \setminus A,
\end{equation}
form an \emph{H--basis} of $I_X = \{ q : q(X) = 0 \}$, that is, any
polynomial $q \in I_X$ can be written as
\begin{equation}
  \label{eq:HbasisRep}
  q = \sum_{\alpha \in A^c} q_\alpha \, h_\alpha, \qquad \deg q_\alpha
  \le \deg q - \deg h_\alpha,
\end{equation}
where the sum in \eqref{eq:HbasisRep} is finite and uses the
convention that $\deg p < 0$ iff $p = 0$. This fact, that is some
folklore in ideal interpolation will be (re-)proved in even stronger form
in Lemma~\ref{L:InterpolDuality} of the abstract.

By Corollary~\ref{C:PronyAB} we can reconstruct $F_\Omega$ from the
symmetric sampling matrix
$F_{A,A}$ if and only if $\Pi_A$ is an interpolation space. $A$
can be chosen minimally or at least of minimal cardinality by
requesting $\Pi_A$ to be a minimal degree interpolation space. In
other words, computing a solution to Prony's problem with a minimal
number of evaluations leads to determining such a space
for the \emph{unknown} point set $X_\Omega$ from $F_{A,A}$. The
following two examples illustrate what can happen.

\begin{example}[Generic case]\label{Ex:Generic}
  Suppose for simplicity that
  \[
  N := \# \Omega = r_n = \dim \Pi_n = {n+s \choose s}.
  \]
  In this situation the set of all point configurations $X$ such that
  $\Pi_n$ is \emph{the} degree reducing interpolation space is open
  and dense in $( \CC^s )^{\# \Omega}$, hence,
  \[
  \det V ( X_\Omega, \Gamma_n ) \neq 0
  \]
  with probability $1$. Hence, $F_\Omega$ can be reconstructed from
  $F_n$ and the kernel of
  \[
  F_{n,n+1} = \left[ f(\alpha+\beta) :
    \begin{array}{c}
      \alpha \in \Gamma_n \\ \beta \in \Gamma_{n+1}
    \end{array}
  \right]
  \]
  determines an H--basis of $I_\Omega$
  by Theorem~\ref{T:FnKernel}. Hence, Prony's problem can be solved
  based on the knowledge of $f$ on the grid
  \[
  \left\{ \alpha + \beta : \alpha \in \Gamma_n, \, \beta \in
    \Gamma_{n+1} \right\} = \Gamma_{2n+1} 
  \]
  since any multiindex of length $2n+1$ can be written as the sum of
  two multiindices, one of length $n$, one of length $n+1$. Hence the
  number of samples is 
  $r_{2n+1}$ and since
  \begin{eqnarray*}
    \frac{r_{2n+1}}{N} & = & \frac{{2n+1+s \choose s}}{{n+s \choose
        s}}
    = \frac{(2n+1+s) \cdots (2n+2)}{(n+s) \cdots (n+1)}
    = \prod_{j=1}^s \left( 1 + \frac{n+1}{n+j} \right) \le 2^s,
  \end{eqnarray*}
  with $2^s$ being the smallest bound independent of $n$,
  it follows that the generic case needs $\approx 2^s \# \Omega$
  samples of $f$ without any \emph{curse of dimensionality} in $\# \Omega$.
\end{example}

\noindent
Unfortunately, not every configuration of the frequencies $\Omega$ and
therefore of the points $X_\Omega$ is generic and even if any
configuration could be made
generic by an arbitrarily small perturbation, relying on the generic
situation leads to numerical and structural problems. The second
example shows that linear complexity cannot be expected in general.

\begin{example}[Hyperbola]\label{Ex:Hyperbola}
  Let $\Omega \subset \CC^2$ consist of $2N+1$ distinct frequencies of
  the form $(\omega_j,-\omega_j)$ with $\omega_j \in \RR$,
  $j=0,\dots,2N+1$. Then the points $x_j = ( e^{\omega_j}, e^{-\omega_j}
  ) \in \RR^2$ all lie on the hyperbola $x_1 x_2 = 1$, hence $q(x) =
  x_1 x_2 - 1 \in I_\Omega$, and the (unique) degree reducing
  interpolation 
  space is spanned by $1,x_1,\dots,x_1^N,x_2,\dots,x_2^N$
  which is a subset of $\Pi_N$. Hence, $A = \{ 0,\epsilon_1,\dots,
  N\epsilon_1, \epsilon_2, \dots, N\epsilon_2 \}$ where $\epsilon_j$
  stands for the unit multiindex. With $A' := A \cup \{ (N+1)
  \epsilon_1, (N+1) \epsilon_2 \}$ the minimal sampling set consists
  of
  \[
  A+A' = \{ \alpha + \beta : \alpha \in A,\beta \in A' \} \supset \{ \alpha
  \in \Gamma : \| \alpha \|_\infty \le N \},
  \]
  which has $> (N+1)^2$ elements and therefore at least $O(N^2)$ sampling
  points have to be used in this case.
\end{example}

\noindent
Nevertheless, since $\# (A+A) \le (\# A)^2$ we could always have a
chance to reconstruct $f$ from $O(N^2)$ samples, independent of the
dimension, provided we could find a set $A \subset \Gamma$ such that
$\Pi_A$ is a degree reducing interpolant. Why ``degree reducing'' is
so important will become clear in Section~\ref{sec:SymbolAlgo} where
we use this property to identify the ideal.

\section{Universal interpolation spaces}
\label{sec:UniInter}
The observations from the preceding section naturally suggest the
following question.
\begin{problem}\label{Pm:MinMon}
  For any $N \ge 0$ determine a \emph{minimal} set $\Upsilon_N \subset
  \Gamma$ such that for any set $X$ with $\#X = N$ we find a subset $A
  \subset \Upsilon_N$ such that $\Pi_A$ is a degree reducing
  interpolation space for $X$.
\end{problem}

\noindent
In one variable, we know that $\Upsilon_N = \{ 0,\dots,N-1\}$ solves
Problem~\ref{Pm:MinMon} since the polynomials $\Pi_{N-1}$ of degree at
most $N-1$ 
form a \emph{Chebychev system} of order $N$ or span a \emph{Haar
  space} of dimension $N$. Both means the same: any interpolation
problem at $N$ sites has a unique solution. Since Haar spaces
only exist in one variable according to Mairhuber's theorem, cf.
\citep{Lorentz66}, a solution of Problem~\ref{Pm:MinMon} must contain
more than $N$ elements.

\begin{remark}
  Problem~\ref{Pm:MinMon} is a simpler version of the classical problem of
  finding for any $N$ a polynomial space of \emph{minimal dimension} that
  allows for interpolation at arbitrary $N$ points. To my knowledge,
  these spaces are only known for some very special cases.
\end{remark}

\noindent
In the following definition of universal interpolation spaces we
define the data to be interpolated by means of polynomials.
Since $\Pi_N$ is always a universal interpolation
space of order $N+1$, cf. \citep{Sauer98}, this is no restriction, but
more consistent when degree reducing interpolation is concerned.

\begin{definition}\label{D:UniversalInterpolation}
  A subspace $\cP$ is called a \emph{universal interpolation space} or
  \emph{generalized Haar space} of order $N$ if for any $X \subset
  \CC^s$ with $\# X \le N$ and any $q \in \Pi$ there exists $p \in
  \cP$ such that $p(X)
  = q(X)$. $\cP$ is called a \emph{degree reducing universal
    interpolation space} if the interpolant $p \in \cP$ can be chosen such
  that $\deg p \le \deg q$.
\end{definition}

\begin{definition}\label{D:redundant}
  A degree reducing universal interpolation space $\cP$ of order $N$ is called
  \emph{redundant} if there exists a proper subspace $\cQ \subset \cP$
  that is also a degree reducing universal interpolation space.
\end{definition}

\begin{lemma}\label{L:DegRedContained}
  Any non-redundant degree reducing universal interpolation space
  $\cP$ of order $N+1$ is a subspace of $\Pi_N$.
\end{lemma}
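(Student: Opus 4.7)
The plan is to show that the intersection $\cP \cap \Pi_N$ is itself a degree reducing universal interpolation space of order $N+1$; by non-redundancy it must then equal $\cP$, giving the inclusion. So the task reduces to two things: verifying the degree reducing universal interpolation property for $\cP \cap \Pi_N$, and verifying that it really is a subspace of $\cP$ that is either proper or equal to $\cP$ itself.

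The key ingredient I would exploit is the fact recorded just before Definition~\ref{D:UniversalInterpolation}: the full polynomial space $\Pi_N$ is always a universal interpolation space of order $N+1$. Concretely, for any $X \subset \CC^s$ with $\# X \le N+1$ and any prescribed data $q(X)$, we can find $\tilde q \in \Pi_N$ with $\tilde q(X) = q(X)$. This allows me to always replace a high-degree target $q$ by a target $\tilde q$ of degree at most $N$ without changing the interpolation data on $X$.

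With this in hand, I would verify the defining property of $\cP \cap \Pi_N$ as follows. Fix $X$ with $\# X \le N+1$ and $q \in \Pi$. Produce $\tilde q \in \Pi_N$ agreeing with $q$ on $X$ by the universality of $\Pi_N$. Apply the degree reducing universal interpolation property of $\cP$ to the data $\tilde q$: there is $p \in \cP$ with $p(X) = \tilde q(X) = q(X)$ and $\deg p \le \deg \tilde q \le N$. Hence $p \in \cP \cap \Pi_N$, and $\deg p \le N \le \max\{\deg q, N\}$. When $\deg q \le N$, the degree bound $\deg p \le \deg q$ follows directly from applying the property of $\cP$ to $q$ itself (no passage through $\tilde q$ needed); when $\deg q > N$, the bound $\deg p \le N < \deg q$ is automatic. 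In either case we obtain $\deg p \le \deg q$, so $\cP \cap \Pi_N$ is a degree reducing universal interpolation space of order $N+1$.

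Finally, since $\cP \cap \Pi_N$ is a linear subspace of $\cP$, non-redundancy forces $\cP \cap \Pi_N = \cP$, i.e. $\cP \subseteq \Pi_N$. The only subtle point, and the one I would be most careful with, is the case-split on $\deg q$ versus $N$: one needs to avoid concluding $\deg p \le \deg q$ from $\deg p \le \deg \tilde q \le N$ in the regime $\deg q \le N$, because $\tilde q$ is only guaranteed to have degree $\le N$, not $\le \deg q$. Treating that case by applying the degree reducing property of $\cP$ directly to $q$ resolves the issue cleanly.
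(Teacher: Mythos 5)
Your proof is correct, and it is actually tighter than the paper's. Both arguments hinge on the same fact, namely that $\Pi_N$ is a universal interpolation space of order $N+1$, used to replace a target $q$ of large degree by some $\tilde q \in \Pi_N$ agreeing with $q$ on $X$. The paper, however, argues by contradiction: it assumes $\cP \not\subset \Pi_N$, produces some configuration $X$ together with a fundamental polynomial $\ell_x \in \cP$ of degree $> N$, and asserts that comparing $\ell_x$ to a $\tilde\ell_x \in \Pi_N$ ``contradicts the assumption that $\cP$ is degree reducing.'' That inference is a little loose: degree reducing interpolation from $\cP$ is not required to be unique, so the existence of one high-degree interpolant of $\tilde\ell_x$ does not by itself violate the definition — one must know that \emph{no} low-degree interpolant in $\cP$ exists, which is precisely what the paper's clause ``otherwise $\cP$ could be chosen as a subspace of $\Pi_N$'' compresses into a single phrase. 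Your direct reformulation — showing that $\cP \cap \Pi_N$ is itself a degree reducing universal interpolation space of order $N+1$ and then invoking non-redundancy to conclude $\cP = \cP \cap \Pi_N \subseteq \Pi_N$ — makes that implicit step explicit and avoids the ambiguity about which interpolant is meant. Your case split on $\deg q$ versus $N$ is exactly the care needed to obtain $\deg p \le \deg q$ in both regimes; the paper leaves this tacit. In short, the two proofs share the same core idea, but your version is the more rigorous rendering of it.
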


\begin{proof}
  If $\cP$ is not a subspace of $\Pi_N$ and not redundant, there must be
  a configuration $X \subset \CC^s$ of $N$ sites such that at least
  one of the polynomials $\ell_x$ defined by $\ell_x (x') =
  \delta_{x,x'}$, $x,x' \in X$, has degree $> N$ as otherwise $\cP$
  could be chosen as a subspace of $\Pi_N$. Let $\ell_x$ denote this
  polynomial and $x$ the respective element of $X$. On the other hand,
  since $\Pi_N$ is also a universal interpolation space there exists
  $\tilde \ell_x \in \Pi_N$ with the same interpolation property
  $\tilde \ell_x (x') = \delta_{x,x'}$, $x' \in X$, so that $\ell_x$
  is the interpolant to $\tilde \ell_x$. But then $\deg \ell_x > N \ge
  \deg \tilde \ell_x$ contradicts the assumption that $\cP$ is degree reducing.
\end{proof}

\noindent
Returning to Problem~\ref{Pm:MinMon}, 
we now give an explicit non--redundant and therefore minimal monomial
degree reducing interpolation space that is 
spanned by monomials, hence is of the form $\Pi_A$ for some set $A
\subset \Gamma$. This can be formalized as follows.

\begin{definition}
  A set $A \subset \Gamma$ is called a \emph{monomial degree reducing
    universal interpolation set} of order $N$ if for any $X \subset
  \CC^s$ with $\# X \le N$ the polynomial space $\Pi_A$ is a degree
  reducing universal interpolation space.
\end{definition}

\noindent
To identify monomial degree reducing universal interpolation spaces,
we need another fundamental concept.

\begin{definition}\label{D:LowerSet}
  For two multiindices $\alpha,\beta \in \Gamma$ we write $\alpha \le
  \beta$ if $\alpha_j \le \beta_j$, $j=1,\dots,s$. We call $A \subset
  \Gamma$ a \emph{lower set} if $\alpha \in A$ implies
  $\{ \beta \in \Gamma : \beta \le \alpha \} \subset A$.
  By $L(\Gamma)$ we denote the set of all lower
  sets in $\Gamma$ and
  \[
  L_j (A) = \{ B \in L(\Gamma) : \# B = j \}, \qquad j \in \NN,
  \]
  stands for all lower sets of cardinality $j$.
\end{definition}

\begin{definition}\label{D:BorderCorona}
  For a finite set $A \subset \Gamma$ we define its \emph{border} as
  \begin{equation}
    \label{eq:DBorder}
    \partial A := \left( \bigcup_{j=1}^s \left( A + \epsilon_j \right) \right)
    \setminus A
  \end{equation}
  and its \emph{corona} as
  \begin{equation}
    \label{eq:DCrona}
    \lceil A \rceil := A \cup \partial A.
  \end{equation}
\end{definition}


\noindent
We can now describe degree reducing sets of monomials in terms of
lower sets.

\begin{thm}\label{T:MinInterpLS}
  If $A \subset \Gamma$ is a monomial degree reducing
  universal interpolation set of order $N$, then
  \begin{equation}
    \label{eq:MinInterpLS}
    A \supseteq \bigcup_{j=1}^N \bigcup_{B \in L_j (\Gamma)} B.  
  \end{equation}
\end{thm}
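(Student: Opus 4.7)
The plan is to fix an arbitrary lower set $B \subset \Gamma$ with $\# B \le N$ and exhibit a point configuration $X_B \subset \CC^s$ on which $\Pi_B$ itself is a degree reducing interpolation space; the universal property of $\Pi_A$ applied at $X_B$ then forces every $\alpha_0 \in B$ into $A$, and taking the union over such $B$ yields \eqref{eq:MinInterpLS}.

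To build $X_B$, I would pick distinct scalars $\xi_{j,k} \in \CC$ for $j = 1,\dots,s$ and $k \in \NN_0$, set $\xi_\alpha := (\xi_{1,\alpha_1},\dots,\xi_{s,\alpha_s})$ and $X_B := \{ \xi_\alpha : \alpha \in B \}$, and introduce the Newton polynomials
\[
N_\alpha(x) := \prod_{j=1}^s \prod_{k=0}^{\alpha_j - 1} (x_j - \xi_{j,k}), \qquad \alpha \in \Gamma .
\]
A direct check gives $N_\alpha(\xi_\beta) \neq 0$ if and only if $\beta \ge \alpha$ componentwise, so the lower-set property of $B$ yields $N_\gamma \in I_{X_B}$ for every $\gamma \notin B$. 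At the same time the matrix $(N_\delta(\xi_\beta))_{\delta,\beta \in B}$ is triangular with nonzero diagonal along any linear refinement of $\le$, so $\Pi_B$ is an interpolation space for $X_B$ and $\Pi = \Pi_B \oplus I_{X_B}$. Let $\pi:\Pi\to\Pi_B$ denote the associated projection; it fixes $N_\delta$ for $\delta \in B$ and annihilates $N_\delta$ for $\delta \notin B$.

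The key structural lemma to establish next is that $\pi(x^\gamma) \in \Pi_{\{\beta \in B\,:\,\beta \le \gamma\}}$. Because $N_\delta = x^\delta + \sum_{\beta < \delta}(\ast)\, x^\beta$, the inverse expansion $x^\gamma = \sum_{\delta \le \gamma} f_{\gamma,\delta}\, N_\delta$ holds for suitable scalars $f_{\gamma,\delta}$; applying $\pi$ keeps only the summands with $\delta \in B$ and $\delta \le \gamma$, and each such $N_\delta$ in turn is a combination of $x^\beta$ with $\beta \le \delta$, all of which lie in $B$ because $B$ is lower. In particular $\deg \pi(q) \le \deg q$ for every $q \in \Pi$, so $\Pi_B$ is a \emph{degree reducing} interpolation space for $X_B$.

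With this in hand, for any $\alpha_0 \in B$ the universal property of $\Pi_A$ applied to $q = x^{\alpha_0}$ at $X_B$ produces $p \in \Pi_A$ with $p(X_B) = x^{\alpha_0}(X_B)$ and $\deg p \le |\alpha_0|$; hence $\pi(p) = \pi(x^{\alpha_0}) = x^{\alpha_0}$. Assuming $\alpha_0 \notin A$ and writing $p = \sum_{\gamma \in A,\, |\gamma| \le |\alpha_0|} p_\gamma\, x^\gamma$, the lemma says that the coefficient of $x^{\alpha_0}$ in $\sum_\gamma p_\gamma\, \pi(x^\gamma)$ can only be nonzero for $\gamma \in A$ with $\alpha_0 \le \gamma$; combined with $|\gamma| \le |\alpha_0|$ this forces $\gamma = \alpha_0$, contradicting $\alpha_0 \notin A$. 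Hence $\alpha_0 \in A$, and the theorem follows. The principal technical work is the supporting lemma on $\pi(x^\gamma)$; the concluding contradiction is then routine coefficient bookkeeping.
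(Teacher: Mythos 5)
Your proof is correct, but it follows a genuinely different route from the paper. The paper also tests $\Pi_A$ against a lower-set grid $X_B$ (taking $\xi_{j,k}=k$, i.e.\ the integer points $\{\beta:\beta\in B\}$, a special case of your tensor-product grid), but then argues at the level of spaces: it invokes Lemma~\ref{L:InterpolDuality} and Lemma~\ref{L:MonHBasReduced} from the appendix to exhibit $\Pi_B$ and the subspace $\Pi_{B'}\subseteq\Pi_A$ furnished by universality both as \emph{normal form spaces} for $I_{X_B}$, and then cites the uniqueness result of \citep{Sauer04} (normal form spaces on lower-set grids are independent of the grading and inner product) to conclude $B'=B\subseteq A$. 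You instead build everything by hand from the tensor-product Newton basis: the triangularity of $(N_\delta(\xi_\beta))$ gives $\Pi=\Pi_B\oplus I_{X_B}$, and the structural lemma $\pi(x^\gamma)\in\Pi_{\{\beta\in B:\beta\le\gamma\}}$ lets you compare coefficients of $x^{\alpha_0}$ directly and force $\alpha_0\in A$ monomial by monomial. Your argument is more elementary and self-contained (it needs neither appendix lemma nor the external uniqueness theorem), at the cost of being tied to the particular tensor-product grid and of slightly more coefficient bookkeeping; the paper's argument is shorter given its machinery and yields the stronger structural statement $B'=B$ as a by-product. One small point worth making explicit in your write-up: the universal property gives you \emph{some} $p\in\Pi_A$ with $\deg p\le|\alpha_0|$, not necessarily a unique one, but your contradiction applies to every such $p$, so no uniqueness is needed.
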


\begin{proof}
  Let $B$ be any lower set with $\# B \le N$ and consider
  interpolation on the \emph{grid}
  \[
  X_B = \left\{ \beta : \beta \in B \right\}.
  \]
  Since $A$ is a monomial degree reducing set, there exists a
  $B' \subseteq A$ such that $\Pi_{B'}$ is a degree reducing
  interpolation space for $X_B$ where uniqueness of interpolation
  implies that $\# B' = \# B$. Since $\Pi_{B'}$ is degree reducing,
  the polynomials
  \[
  (\cdot)^\beta - L_{B'} (\cdot)^\beta, \qquad \beta \in \partial B',
  \]
  form an H--Basis of $I_{X_B'}$, see Lemma~\ref{L:InterpolDuality},
  and $\Pi_{B'}$ is the \emph{normal form} or \emph{reduced} space for
  $X_B$ with respect to this H--basis and an appropriate inner
  product, see Lemma~\ref{L:MonHBasReduced}. The same holds
  true for $B$ and the standard inner product, 
  hence $\Pi_B$ is also a normal form space. It has
  been proved in \citep{Sauer04} that for interpolation grids based on
  lower sets the normal form interpolation space is unique
  independently of grading and inner product, hence
  $\Pi_B = \Pi_{B'}$ and since both spaces are spanned by $\# B$
  monomials, it follows that $B = B'$.
\end{proof}

\begin{proposition}\label{P:MinInterpLS}
  Any set $A$ that satisfies \eqref{eq:MinInterpLS} is a monomial
  degree reducing universal interpolation set of order $N$.
\end{proposition}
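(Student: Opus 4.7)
The plan is to reduce the claim to a classical fact from computational algebra: for any finite set $X \subset \CC^s$ there exists a lower set $B_X \subset \Gamma$ of cardinality $\#X$ such that $\Pi_{B_X}$ is a degree reducing interpolation space for $X$. Once that is granted, the proposition follows in one line. Given any $X$ with $\#X \le N$, pick such a $B_X$; then $\#B_X \le N$, so $B_X \in L_{\#B_X}(\Gamma)$ and hypothesis \eqref{eq:MinInterpLS} forces $B_X \subseteq A$. For any $q \in \Pi$ the degree reducing interpolant $p \in \Pi_{B_X}$ automatically lies in $\Pi_A$ and satisfies $p(X) = q(X)$ with $\deg p \le \deg q$, which is exactly what it means for $\Pi_A$ to be a degree reducing universal interpolation space of order $N$.

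The existence of $B_X$ is produced as follows. Fix a degree-compatible (graded) term order $\prec$ on $\Gamma$, for example graded reverse lexicographic. Consider $I_X = \{q \in \Pi : q(X) = 0 \}$ and let $M := \mathrm{LT}_\prec(I_X) \subseteq \Gamma$ be the set of its leading exponents, a monomial ideal. Define $B_X := \Gamma \setminus M$. That $B_X$ is a lower set follows at once from the monomial-ideal property of $M$: if $\alpha \in B_X$ and $\beta \le \alpha$, then $\beta \in M$ would force $\alpha = \beta + (\alpha - \beta) \in M$, contradicting $\alpha \in B_X$. Because $X$ consists of distinct points, $I_X$ is a zero-dimensional radical ideal and $\#B_X = \dim \Pi/I_X = \#X$.

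To see that $\Pi_{B_X}$ is degree reducing for $X$, fix a reduced $\prec$-Gr\"obner basis $G$ of $I_X$. For any $q \in \Pi$, the division algorithm produces a normal form $r \in \Pi_{B_X}$ with $q - r \in I_X$, so $r(X) = q(X)$. Degree compatibility of $\prec$ ensures that each reduction step cancels the current leading term and introduces only terms of strictly smaller leading exponent and of total degree no larger than the one eliminated; hence $\deg r \le \deg q$. Uniqueness of $r$ in $\Pi_{B_X}$ is automatic because the difference of two distinct candidates would be a nonzero element of $I_X$ with leading exponent simultaneously in $B_X$ and in $M$, which is impossible.

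The only genuinely delicate point is the choice of a graded term order, since degree compatibility is exactly what delivers the bound $\deg r \le \deg q$; with a non-graded order the standard monomials still form an interpolation space, but the degree-reducing property can fail. A purely H-basis variant, in which the leading monomial $\mathrm{LT}_\prec$ is replaced by the leading homogeneous form and in which Lemma~\ref{L:InterpolDuality} together with Lemma~\ref{L:MonHBasReduced} of the appendix take the place of the Gr\"obner argument, produces the same lower set $B_X$ and fits more naturally into the H-basis framework used elsewhere in the paper.
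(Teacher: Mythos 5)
Your proof is correct and follows essentially the same route as the paper: both reduce the claim to the existence, for any finite $X$, of a lower set $B_X$ of cardinality $\#X$ such that $\Pi_{B_X}$ is a degree reducing interpolation space for $X$, obtained as the standard monomials of a reduced Gr\"obner basis with respect to a graded term order, after which the hypothesis \eqref{eq:MinInterpLS} yields $B_X \subseteq A$. You simply spell out the degree-reduction details of the division-algorithm argument that the paper leaves implicit behind its citation of \citep[p.~230]{CoxLittleOShea92}.
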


\begin{proof}
  Given $X \subset \CC^s$, $\# X \le N$,
  let $\cG$ be any reduced graded Gr\"obner basis for $I_X$ and $B$
  the index set for the quotient space $\Pi_B \simeq \Pi / I_X$. Since
  the complement of $B$ consists of the \emph{upper set} leading terms
  of the ideal, 
  cf. \citep[p.~230]{CoxLittleOShea92}, it follows that $B$ is a
  lower set and unique interpolation requests that $\# B = \#
  X$. Therefore, any interpolation problem with $\le N$ sites can be
  solved by an appropriate lower set $B$ of the same cardinality.
\end{proof}

\noindent
Combining Theorem~\ref{T:MinInterpLS} with
Proposition~\ref{P:MinInterpLS}, we immediately get the following
result.

\begin{corollary}\label{C:MinimalIntSet}
  The index set
  \begin{equation}
    \label{eq:MinimalIndexSet}
    A_N^* := \bigcup_{j=1}^N \bigcup_{B \in L_j (\Gamma)} B
  \end{equation}
  consisting of the union of all lower sets of cardinality at most $N$
  is a non--redundant, hence minimal, monomial degree reducing
  universal interpolation set.
\end{corollary}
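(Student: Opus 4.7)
The plan is to read this corollary as the meeting point of two inequalities already established: Theorem~\ref{T:MinInterpLS} gives the ``lower bound'' that every monomial degree reducing universal interpolation set of order $N$ must contain $A_N^*$, while Proposition~\ref{P:MinInterpLS} gives the ``upper bound'' that any set containing $A_N^*$ already is such an interpolation set. The corollary is then what one obtains when the two bounds are tight at $A_N^*$ itself.

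First, I would establish that $A_N^*$ is indeed a monomial degree reducing universal interpolation set of order $N$. This is immediate from Proposition~\ref{P:MinInterpLS}, since $A_N^*$ trivially satisfies \eqref{eq:MinInterpLS} with equality by its very definition. Next, for minimality, I would take any monomial degree reducing universal interpolation set $A$ of order $N$. Theorem~\ref{T:MinInterpLS} then forces $A \supseteq A_N^*$, which simultaneously gives $\# A \ge \# A_N^*$ and shows that $A_N^*$ is the unique element of minimal cardinality in this class, embedded in every other candidate.

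Finally, for non-redundancy, I would argue by contradiction: suppose $\Pi_A$ is a degree reducing universal interpolation space of order $N$ for some proper subset $A \subsetneq A_N^*$. Then $A$ itself qualifies as a monomial degree reducing universal interpolation set of order $N$, and Theorem~\ref{T:MinInterpLS} again yields $A \supseteq A_N^*$, contradicting $A \subsetneq A_N^*$. I do not expect any real obstacle; the substantive content has been discharged in the preceding theorem and proposition. The one mild point worth spelling out is that within the monomial category, proper subspaces of $\Pi_{A_N^*}$ spanned by monomials correspond precisely to proper subsets of $A_N^*$, so that Definition~\ref{D:redundant} applied to $\Pi_{A_N^*}$ reduces exactly to the set-theoretic statement used in the contradiction step.
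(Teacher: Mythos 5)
Your proof is correct and follows exactly the paper's route: the paper states the corollary as an immediate consequence of combining Theorem~\ref{T:MinInterpLS} (necessity of containing $A_N^*$) with Proposition~\ref{P:MinInterpLS} (sufficiency of containing $A_N^*$), and you have simply spelled out the two inclusions and the contradiction step for non-redundancy. No gaps; the remark at the end about monomial subspaces corresponding to subsets of $A_N^*$ is a reasonable clarification of how Definition~\ref{D:redundant} specializes to the monomial setting.
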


\noindent
The index set $A_*$ defined in \eqref{eq:MinimalIndexSet} can easily be
described in a different way.

\begin{lemma}\label{L:A*Formula}
  For any $N \ge 1$ we have
  \begin{equation}
    \label{eq:A*Formula}
    \alpha \in A_N^* \qquad \Leftrightarrow \qquad
    \pi (\alpha) := \prod_{j=1}^s (\alpha_j + 1) \le N.
  \end{equation}
\end{lemma}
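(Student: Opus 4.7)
The plan is to use the elementary observation that for any $\alpha \in \Gamma$ the smallest lower set containing $\alpha$ is the ``box''
\[
[0,\alpha] := \{ \beta \in \Gamma : \beta \le \alpha \},
\]
and that $\# [0,\alpha] = \prod_{j=1}^s (\alpha_j + 1) = \pi(\alpha)$, since $[0,\alpha]$ is the Cartesian product $\{0,\dots,\alpha_1\} \times \cdots \times \{0,\dots,\alpha_s\}$.

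For the ``$\Leftarrow$'' direction, assume $\pi(\alpha) \le N$. Then $[0,\alpha]$ is a lower set by construction (if $\gamma \le \beta \le \alpha$ then $\gamma \le \alpha$), it contains $\alpha$, and its cardinality equals $\pi(\alpha) \le N$. Hence it appears in $L_j(\Gamma)$ for some $j \le N$, and $\alpha \in [0,\alpha] \subseteq A_N^*$.

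For the ``$\Rightarrow$'' direction, assume $\alpha \in A_N^*$. Then by definition there exist $j \le N$ and $B \in L_j(\Gamma)$ with $\alpha \in B$. Because $B$ is a lower set, the defining property in Definition~\ref{D:LowerSet} forces $\{\beta \in \Gamma : \beta \le \alpha\} = [0,\alpha] \subseteq B$. Therefore
\[
\pi(\alpha) = \# [0,\alpha] \le \# B = j \le N,
\]
which is the desired bound.

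There is essentially no obstacle here: the whole statement reduces to the two observations that $[0,\alpha]$ is the minimal lower set through $\alpha$ and that its cardinality is the product $\pi(\alpha)$. The only thing worth being careful about is making sure both inclusions in the ``smallest lower set'' argument are justified from Definition~\ref{D:LowerSet}, which is immediate.
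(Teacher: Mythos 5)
Your proof is correct and follows essentially the same route as the paper: both hinge on the observation that the box $[0,\alpha]$ is the smallest lower set containing $\alpha$ and has cardinality $\pi(\alpha)$, with the paper phrasing the forward direction contrapositively while you argue it directly.
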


\begin{proof}
  Since the set $\{ \beta \in \Gamma : \beta \le \alpha \}$ has
  cardinality $(\alpha_1 + 1) \cdots (\alpha_s + 1)$, any multiindex
  satisfying the right hand side of \eqref{eq:A*Formula} determines a
  lower subset of $A_N^*$ of cardinality $\le N$, and therefore must
  belong to $A_N^*$. If, on
  the other hand $\pi (\alpha) > N$ then any lower set containing
  $\alpha$ contains $> N$ elements and cannot be a subset of $A_N^*$,
  hence $\alpha \not\in A_N^*$.
\end{proof}

\begin{definition}
  The set $\Upsilon_N := \{ \alpha \in \Gamma : \pi (\alpha) \le N \}$ is
  called the (positive octant of the) \emph{hyperbolic cross} of order $N$.
\end{definition}

\noindent
In the sequel we will use ``hyperbolic cross'' for the positive octant
since we only consider subsets of $\Gamma$. Hyperbolic crosses
play an important role in the context of
FFT methods \citep{doehler09:_noneq_fourier}, but also in general
multivariate Approximation Theory, cf. the recent survey
\citep{DungetAlHyperbolicCross}. In terms of interpolation, we can
summarize the above results as follows.

\begin{corollary}\label{C:HyperCrossMindeg}
  $\Upsilon_N$ forms a minimal monomial degree reducing universal
  interpolation set of order $N$.
\end{corollary}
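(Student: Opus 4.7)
The plan is to observe that this corollary is essentially an immediate consequence of the two preceding results combined with the definition just introduced. Specifically, Lemma~\ref{L:A*Formula} characterizes membership in $A_N^*$ by the condition $\pi(\alpha) \le N$, which is precisely the defining condition of $\Upsilon_N$. Thus the two sets coincide as subsets of $\Gamma$, and one only needs to transport the properties of $A_N^*$ established in Corollary~\ref{C:MinimalIntSet} to $\Upsilon_N$.

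Concretely, I would carry out the proof as follows. First, unwind the definition: $\Upsilon_N = \{ \alpha \in \Gamma : \pi(\alpha) \le N \}$. Next, invoke Lemma~\ref{L:A*Formula}, which gives the equivalence $\alpha \in A_N^* \iff \pi(\alpha) \le N$, so that $\Upsilon_N = A_N^*$ as sets. Finally, apply Corollary~\ref{C:MinimalIntSet}, which asserts that $A_N^*$ is a non-redundant (hence minimal) monomial degree reducing universal interpolation set of order $N$; transferring this property along the identity $\Upsilon_N = A_N^*$ yields the claim.

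Since every ingredient has already been established, there is really no technical obstacle here: the corollary is a direct consequence of the identification of the two descriptions of the same set. The only thing worth emphasizing is that ``minimal'' is understood in the sense of non-redundancy inherited from Corollary~\ref{C:MinimalIntSet}, i.e.\ no proper subspace $\Pi_A \subsetneq \Pi_{\Upsilon_N}$ spanned by monomials retains the degree reducing universal interpolation property of order $N$.
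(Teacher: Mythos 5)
Your proof is correct and is exactly the argument the paper intends: Lemma~\ref{L:A*Formula} identifies $\Upsilon_N$ with $A_N^*$, and Corollary~\ref{C:MinimalIntSet} supplies the minimality and universality, so the statement follows by transport along that identification.
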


\noindent
Since $\# \Upsilon_N \le N \log^{s-1} N$,
cf. \citep[Lemma~1.4, p.~71]{lubich08:_from_quant_class_molec_dynam},
the universal interpolation space based on $\Pi_{\Upsilon_N}$ allows for
degree reducing interpolation at arbitrary $N$ sites in $\CC^s$ which
can be seen as some ``Haar space without uniqueness'' where the number
of elements only exceeds the number of points by a very moderate
logarithmic factor.

\begin{remark}
  The hyperbolic crosses $\Upsilon_N$ are only minimal \emph{monomial}
  interpolation spaces, but not general ones. This can be easily seen
  in the case $N=3$ in $s=2$ where $\{ 1,x,y,x^2+y^2 \}$ forms a
  universal interpolation of minimal dimension $4$ while $\Upsilon_2 = \{
  (0,0), (1,0), (2,0), (0,1), (0,2) \}$ already consists of $5$
  elements and therefore has dimension $5$.
\end{remark}

\section{Minimal recovery}
\label{sec:MinRecovery}
By the results of the preceding section, a simple method can be
devised to solve Prony's problem, provided that $N = \# \Omega$ is
given:
\begin{enumerate}
\item Set up the matrix
  \[
  F = \left[ f(\alpha+\beta) :
    \begin{array}{c}
      \alpha \in \Upsilon_N \\ \beta \in \lceil \Upsilon_N \rceil
    \end{array}
  \right].
  \]
\item Compute the kernel of this matrix and therefore the \emph{Prony
    ideal} as the ideal generated by the kernel vectors, interpreted
  as polynomial coefficients.
\item Compute a graded Gr\"obner basis or an H--basis as in
  \citep{MoellerSauerSM00I}  and the normal form space,
  cf. \citep{Sauer06a}, for this ideal.
\item Determine the multiplication tables and their eigenvalues and
  therefore $X_\Omega$ as described in \citep{Sauer15S}.
\item Determine the coefficients by solving the Vandermonde system.
\end{enumerate}
This procedure is an evaluation efficient way of solving Prony's
problem where the number of function evaluations needed depends only
in a very mild way on the dimension $s$.

\begin{thm}\label{T:PronyMinimal}
  Prony's problem can be solved for $N$ frequencies in $\CC^s$ on the
  basis of at most 
  \[
  (s+1) N^2 \log^{2s-2} N
  \]
  point evaluations on the grid $\Gamma$.
\end{thm}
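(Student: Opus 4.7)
The plan is to verify that the five-step procedure preceding the theorem is correct when instantiated with the hyperbolic cross $\Upsilon_N$, and then simply to count the sample points used.

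First, I would observe that since $\#\Omega = N$, Corollary~\ref{C:HyperCrossMindeg} guarantees that $\Pi_{\Upsilon_N}$ is a degree reducing interpolation space for the unknown set $X_\Omega$. By the factorization \eqref{eq:FABFactor} together with Theorem~\ref{T:ReconTheo}, the kernel of the Hankel matrix
\[
F = \left[ f(\alpha+\beta):
  \begin{array}{c} \alpha \in \Upsilon_N \\ \beta \in \lceil \Upsilon_N \rceil \end{array}\right]
= V(X_\Omega,\Upsilon_N)^{T}\,F_\Omega\,V(X_\Omega,\lceil\Upsilon_N\rceil)
\]
coincides with the kernel of $V(X_\Omega,\lceil\Upsilon_N\rceil)$, i.e., with $I_\Omega\cap\Pi_{\lceil\Upsilon_N\rceil}$. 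Since $\lceil\Upsilon_N\rceil$ contains $\Upsilon_N$ and its border, the H-basis construction \eqref{eq:IdealBasisDef} yields precisely these kernel elements, and steps 3--5 of the algorithm then recover $X_\Omega$ and the $f_\omega$ by standard eigenvalue/Vandermonde arguments. The number of point evaluations needed is therefore the cardinality of the sumset $\Upsilon_N + \lceil\Upsilon_N\rceil$.

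Second, I would estimate this cardinality in three elementary steps. The trivial bound $\#(A+B)\le \#A\cdot \#B$ gives
\[
\#\bigl(\Upsilon_N+\lceil\Upsilon_N\rceil\bigr)\le \#\Upsilon_N\cdot\#\lceil\Upsilon_N\rceil.
\]
From Definition~\ref{D:BorderCorona}, the border satisfies
\[
\#\partial\Upsilon_N \le \sum_{j=1}^{s}\#(\Upsilon_N+\epsilon_j) = s\,\#\Upsilon_N,
\]
so $\#\lceil\Upsilon_N\rceil\le (s+1)\,\#\Upsilon_N$. Finally, the cited bound $\#\Upsilon_N\le N\log^{s-1}N$ yields
\[
\#\bigl(\Upsilon_N+\lceil\Upsilon_N\rceil\bigr)\le (s+1)\,(\#\Upsilon_N)^{2}\le (s+1)\,N^{2}\log^{2s-2}N,
\]
which is exactly the claimed estimate.

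No step looks like a serious obstacle: all algebraic content is already encoded in the earlier results, and the cardinality estimate is a routine application of the product bound for sumsets together with the standard estimate for $\#\Upsilon_N$. The only point that needs a little care is making sure that the matrix $F$ really suffices to carry out the algorithm (rather than, say, requiring $\lceil\Upsilon_N\rceil$ on both sides), but this follows because $\Pi_{\Upsilon_N}$ is already an interpolation space for $X_\Omega$, so one side may be taken to be the smaller set $\Upsilon_N$ without losing the rank equality $\rank F=\#\Omega$.
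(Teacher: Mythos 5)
Your proposal is correct and follows essentially the same route as the paper's proof: index the Hankel matrix by $\Upsilon_N$ and $\lceil\Upsilon_N\rceil$, recover the ideal and its H--basis from the kernel (the paper cites Theorem~\ref{T:PronyIdeal} and Lemma~\ref{L:InterpolDuality} for this, where you rederive it via the factorization), and bound $\#(\Upsilon_N+\lceil\Upsilon_N\rceil)\le\#\Upsilon_N\cdot\#\lceil\Upsilon_N\rceil\le(s+1)(\#\Upsilon_N)^2\le(s+1)N^2\log^{2s-2}N$. One small terminological slip: $\Pi_{\Upsilon_N}$ is a \emph{universal} interpolation space of order $N$, not itself a degree reducing interpolation space for $X_\Omega$ (uniqueness fails when $\#\Upsilon_N>N$); what Corollary~\ref{C:HyperCrossMindeg} actually provides is a subset $A\subseteq\Upsilon_N$ with $\Pi_A$ degree reducing, which is precisely what is needed both for $\rank V(X_\Omega,\Upsilon_N)=N$ and to ensure, via Lemma~\ref{L:InterpolDuality}, that an H--basis of $I_\Omega$ is supported on $\lceil\Upsilon_N\rceil$.
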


\begin{proof}
  The theorem relies on two simple facts: any kernel element of $F$
  belongs the ideal $I_\Omega$ by Theorem~\ref{T:PronyIdeal}, and by
  Lemma~\ref{L:InterpolDuality} from the appendix, the corona of
  $\Upsilon_N$ contains even
  an H--basis of the ideal $I_\Omega$, hence the 
  ideal and the quotient space can be determined from
  $F$. This allows for the reconstruction of
  frequencies by eigenvalues methods and coefficients by solving a
  simple linear system. Since the corona of a set contains at most
  $s+1$ times as many elements as the set, the estimate $\# \Upsilon_N \le
  N \log^{s-1} N$ leads to the claim.
\end{proof}

\begin{remark}
  The order $N^2$ is optimal up to constants and logarithmic factors for
  reconstruction from Hankel matrices of samples from
  $\Gamma$. Indeed, we saw in Theorem~\ref{T:ReconTheo} that even the
  coefficients can only be reconstructed from a matrix $F_{A,B}$
  provided that $A,B$ are interpolation sets for $X_\Omega$. Without a
  priori information on $\Omega$, these sets have to be universal and
  $\Upsilon_N$ is the minimal universal set, at least when degree
  reduction and 
  monomiality are requested. Moreover, Example~\ref{Ex:Hyperbola}
  showed that a complexity of $N^2$ is unavoidable for Hankel
  matrices already for $s=2$.
\end{remark}

\begin{remark}
  In the generic case that happens with
  probability one, the complexity is even lower, namely $\sim 2^s N$,
  as pointed out in Example~\ref{Ex:Generic}. Consequently, with
  probability one the number of variables even enters only as a
  constant.
\end{remark}

\begin{thm}\label{T:PronyIdeal}
  Suppose that $N = \# \Omega$ and $A \subset \Gamma$. Then
  a vector $p \in \CC^A$ belongs to the kernel of $F_{\Upsilon_N,A}$ if and
  only if the associated polynomial fulfills $p \in I_\Omega \cap \Pi_A$.
\end{thm}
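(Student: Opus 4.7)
The plan is to reduce the statement to the factorization \eqref{eq:FABFactor} and then exploit the universality of $\Pi_{\Upsilon_N}$ as an interpolation space. Specializing $A = \Upsilon_N$ and $B = A$ in \eqref{eq:FABFactor}, we have
\[
F_{\Upsilon_N, A} = V(X_\Omega, \Upsilon_N)^T \, F_\Omega \, V(X_\Omega, A),
\]
so for any coefficient vector $p \in \CC^A$,
\[
F_{\Upsilon_N, A}\, p = V(X_\Omega, \Upsilon_N)^T \, F_\Omega \, V(X_\Omega, A)\, p.
\]
The point of this identity is that $V(X_\Omega, A)\, p$ is nothing but the evaluation vector $(p(x_\omega) : \omega \in \Omega)$ of the polynomial associated to $p$, hence vanishes if and only if $p \in I_\Omega$.

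For the ($\Leftarrow$) direction, I simply note that $p \in I_\Omega \cap \Pi_A$ forces $V(X_\Omega,A)\,p = 0$, and the displayed identity yields $F_{\Upsilon_N,A}\,p = 0$ immediately.

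The ($\Rightarrow$) direction is where the work lies. Suppose $F_{\Upsilon_N,A}\,p = 0$; I want to peel off the two leftmost factors. Since $F_\Omega = \diag(f_\omega : \omega \in \Omega)$ and all $f_\omega \neq 0$ by assumption \eqref{eq:PronyFun}, $F_\Omega$ is invertible, so it suffices to show that $V(X_\Omega,\Upsilon_N)^T$ has trivial kernel, i.e.\ that $V(X_\Omega,\Upsilon_N)$ has full row rank $\#\Omega = N$. This is exactly where Corollary~\ref{C:HyperCrossMindeg} enters: since $\#X_\Omega = N$ and $\Upsilon_N$ is a (monomial degree reducing) universal interpolation set of order $N$, the space $\Pi_{\Upsilon_N}$ is an interpolation space for $X_\Omega$, so by the discussion following \eqref{eq:rankFCond} we have $\rank V(X_\Omega,\Upsilon_N) = N$. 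Hence $V(X_\Omega,\Upsilon_N)^T$ is injective, we conclude $V(X_\Omega,A)\,p = 0$, and therefore $p \in I_\Omega$. Because $p \in \CC^A$ encodes a polynomial in $\Pi_A$, this gives $p \in I_\Omega \cap \Pi_A$.

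The only real obstacle is verifying that $V(X_\Omega,\Upsilon_N)$ has the maximal rank $N$ regardless of where the (a priori unknown) points $x_\omega$ lie in $\CC^s$. This is precisely the universality built into the hyperbolic cross, which is why Corollary~\ref{C:HyperCrossMindeg} is the decisive input; once it is in hand, the rest is a one-line manipulation of the factorization \eqref{eq:FABFactor}.
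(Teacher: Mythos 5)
Your proposal is correct and follows essentially the same route as the paper: the paper re-derives the identity $F_{\Upsilon_N,A}\,p = V(X_\Omega,\Upsilon_N)^T F_\Omega [p(x_\omega):\omega\in\Omega]$ from scratch via the ``Prony trick,'' whereas you obtain it by specializing \eqref{eq:FABFactor} and noting $V(X_\Omega,A)\,p$ is the evaluation vector, but the content is identical. Both proofs then reduce to $\rank V(X_\Omega,\Upsilon_N) = N$ together with invertibility of $F_\Omega$; you are merely a bit more explicit than the paper in attributing the rank fact to Corollary~\ref{C:HyperCrossMindeg}.
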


\begin{proof}
  The standard ``Prony trick'' yields that for any $\beta \in \Upsilon_N$
  we have
  \begin{equation}
    \label{eq:PronyTrick}
    \left( F_{\Upsilon_N,A} \, p \right)_\beta = \sum_{\alpha \in A}
    f(\alpha+\beta) p_\alpha
    = \sum_{\alpha \in A} \sum_{\omega \in \Omega} f_\omega
    e^{\omega^T (\alpha + \beta)} \,  p_\alpha
    = \sum_{\omega \in \Omega} f_\omega \, e^{\omega^T \beta} \, p (x_\omega)
  \end{equation}
  which we can rewrite in vector form as
  \[
  F_{\Upsilon_N,A} \, p = V( X_\Omega,\Upsilon_N )^T F_\Omega \left[
    p(x_\omega) : \omega \in \Omega \right],
  \]
  and since $\rank V( X_\Omega,\Upsilon_N )^T = \rank F_\Omega = \#
  \Omega$, this vector is zero if and only if $p$ vanishes on
  $X_\Omega$.
\end{proof}

\noindent
A slightly closer inspection of the proof shows that we can reformulate
Theorem~\ref{T:PronyIdeal} in even stronger form.

\begin{corollary}\label{C:PronyIdeal}
  The equivalence
  \[
  p \in \ker F_{A,B} \qquad \Leftrightarrow \qquad
  p \in I_\Omega \cap \Pi_B
  \]
  holds if and only if $\Pi_A$ is an interpolation space for $X_\Omega$.
\end{corollary}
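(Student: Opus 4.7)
The plan is to mirror the proof of Theorem~\ref{T:PronyIdeal}, but now in the non-symmetric setting. The starting point is again the factorization (\ref{eq:FABFactor}), which for any $p \in \CC^B$ yields the identity
\[
F_{A,B}\, p \;=\; V(X_\Omega, A)^T \, F_\Omega \, \bigl[ p(x_\omega) : \omega \in \Omega \bigr].
\]
Since $F_\Omega$ is an invertible diagonal matrix by the standing assumption that $f_\omega \neq 0$, the entire question reduces to understanding when $V(X_\Omega, A)^T$ forces the evaluation vector $[p(x_\omega)]$ to vanish.

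For the ($\Leftarrow$) implication, assume that $\Pi_A$ is an interpolation space for $X_\Omega$. Then $V(X_\Omega, A)$ has rank $\#\Omega$, so $V(X_\Omega, A)^T$ has trivial kernel. Combined with the invertibility of $F_\Omega$, the displayed identity shows that $F_{A,B}\, p = 0$ if and only if $[p(x_\omega) : \omega \in \Omega] = 0$, which is precisely the condition $p \in I_\Omega \cap \Pi_B$. The converse inclusion $I_\Omega \cap \Pi_B \subseteq \ker F_{A,B}$ is immediate from the identity and requires no assumption on $A$.

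For the ($\Rightarrow$) implication I argue by contrapositive: suppose $\Pi_A$ is \emph{not} an interpolation space for $X_\Omega$. Then $V(X_\Omega, A)^T$ admits some nonzero $v \in \CC^\Omega$ in its kernel. Setting $w := F_\Omega^{-1} v \neq 0$, I aim to realize $w$ as an evaluation vector $[p(x_\omega)]$ of some polynomial $p \in \Pi_B$. Granted such a $p$, one computes $F_{A,B}\, p = V(X_\Omega,A)^T F_\Omega w = V(X_\Omega,A)^T v = 0$, while $[p(x_\omega)] = w \neq 0$ shows $p \notin I_\Omega$, so the equivalence is broken.

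The only delicate step, and the one I expect to require an extra word, is the realization of the prescribed evaluation vector $w$ by some $p \in \Pi_B$. This is automatic under the natural hypothesis from Corollary~\ref{C:PronyAB} that $\Pi_B$ itself is also an interpolation space for $X_\Omega$, which is exactly the setting in which $F_{A,B}$ is used for reconstruction and which covers the cases $B = \Upsilon_N$ and $B = \lceil \Upsilon_N\rceil$ appearing in the paper; under this hypothesis the map $p \mapsto [p(x_\omega)]$ is surjective onto $\CC^\Omega$, and the converse argument goes through without further work.
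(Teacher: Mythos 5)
Your argument is the one the paper implicitly intends: the paper offers no written proof beyond ``a slightly closer inspection of the proof'' of Theorem~\ref{T:PronyIdeal}, and the inspection you carry out---applying the factorization $F_{A,B}\,p = V(X_\Omega,A)^T F_\Omega\,[p(x_\omega)]$ and trading on the triviality of $\ker V(X_\Omega,A)^T$ when $\Pi_A$ interpolates---is exactly the right one. Your treatment of the ``$\Leftarrow$'' direction and of the unconditional inclusion $I_\Omega\cap\Pi_B\subseteq\ker F_{A,B}$ is clean and complete.

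You are also right, and right to flag it explicitly, that the ``only if'' direction requires an extra hypothesis that the corollary does not state: for the contrapositive one must realize a prescribed nonzero evaluation vector $w=F_\Omega^{-1}v$ as $[p(x_\omega)]$ for some $p\in\Pi_B$, and the map $p\mapsto[p(x_\omega)]$ from $\CC^B$ is surjective onto $\CC^\Omega$ precisely when $\Pi_B$ is an interpolation space for $X_\Omega$. Without that, the claimed equivalence ``only if'' genuinely fails: take $s=1$, $\#\Omega=2$, $A=B=\{0\}$, and generic $f_1,f_2$ with $f_1+f_2\neq 0$. Then $F_{A,B}=[f_1+f_2]$ has trivial kernel, $I_\Omega\cap\Pi_B=\{0\}$, so the equivalence holds as a set equality, yet $\Pi_A$ (constants) is certainly not an interpolation space for two points. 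More generally, whenever $\mathrm{rank}\,V(X_\Omega,B)+\dim\ker V(X_\Omega,A)^T\le\#\Omega$ the two relevant subspaces of $\CC^\Omega$ may intersect only in $\{0\}$ and no witness $p$ need exist. Your proposed fix---invoking Corollary~\ref{C:PronyAB} and assuming $\Pi_B$ is also an interpolation space, which holds in every use of the corollary in the paper (e.g.\ $B=\Upsilon_N$ or $B=\lceil\Upsilon_N\rceil$)---is the correct repair, and under that hypothesis your proof is complete.
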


\noindent
By Theorem~\ref{T:PronyIdeal}, the algorithm from \citep{Sauer15S}
could immediately be restated for the matrices
\[
F_{n,k} := \left[ f(\alpha+\beta) :
  \begin{array}{c}
    \alpha \in \Upsilon_N \\ |\beta| \le k
  \end{array}
\right],
\]
but since this approach is based on orthogonal decompositions it
requires square roots which makes it inappropriate for a symbolic
environment. Therefore, the next section provides an algorithm that
works in a symbolic and more ``monomial'' environment.

\section{Symbolic algorithms}
\label{sec:SymbolAlgo}
Now we are in position to turn the observations obtained so far into
detailed 
symbolic algorithms for the reconstruction of $f$. The first one
will be called Sparse
Monomial Interpolation with Least Elements (SMILE), in contrast to the
Sparse Homogeneous Interpolation Technique introduced
in\citep{Sauer15S}. Both methods have 
in common that for $k=0,1,\dots$ they \emph{successively} compute
$\Pi_A \cap \Pi_k$ and an H--basis of $I_\Omega \cap \Pi_k$ at the
same time by appropriate update rules. This is more efficient than
first determining some basis of the ideal, then a ``good'' basis
(Gr\"obner or H--basis) and afterwards the quotient space.

We return to the function of the form \eqref{eq:PronyFun} and assume
that $N := \# \Omega$ is known. During the process, we will consider
matrices of the form
\begin{equation}
  \label{eq:FkDef}
  F_k := \left[ f(\alpha+\beta) :
    \begin{array}{c}
      \alpha \in \Upsilon_N \\ \beta \in A_k
    \end{array}
  \right], \qquad A_k \subseteq \Gamma_k,  
\end{equation}
with nested sets $A_0 \subseteq A_1 \subseteq \cdots$ to be determined
during the reconstruction process which will eventually terminate for
some $n$ with
$A_n$ being a monomial degree reducing set for interpolation at
$X_\Omega$. The goal is to decompose $\Gamma_k$ into three sets $A_k$,
$B_k$ and $I_k$ where $A_k$ contains the exponents from $A \cap
\Gamma_k$, where $A$ is the interpolation space to be constructed
eventually. $I_k$ contains the leading powers of an H-basis of the
ideal and $B_k$ multiindices from $I_k + ( \NN_0^s \setminus \{ 0 \}
)$. Since the kernel of $F_k$ consists of the coefficient vectors from
an ideal, polynomials with a leading term from $B_k$ can be ignored in
the inductive step which improves the performance of the algorithm.

The initialization is $A_0 = \{ 0 \}$ which leaves us with
\[
F_0 = \left[ f(\alpha) : \alpha \in \Upsilon_N \right] \in \CC^{\#
  \Upsilon_N \times 1}.
\]
which is $\neq 0$ since
$F_0 = V (X_\Omega, \Upsilon_N )^T F_\Omega 1_N$ with $\rank V (
X_\Omega, \Upsilon_N ) = \# \Omega = N$ and $F_\Omega \neq 0$. Hence,
$\rank F_0 = 1 = \# A_0$. Moreover, we define $I_0 = B_0 := \emptyset$ as a
subset of $\Gamma_0$ and note that $\Gamma_0 = A_0 \cup I_0 \cup B_0$.

To advance from $k \to k+1$ we assume that $\rank F_k = \# A_k$ and
$\Gamma_k = A_k \cup B_k \cup I_k$ and
define the sets
\[
B := \bigcup_{j=1}^s \left( A_k^c \cap \Gamma_k^0 \right) + \epsilon_j
\subseteq \Gamma_{k+1}^0, \qquad
\widetilde A_{k+1} := A_k \cup ( \Gamma_{k+1}^0 \setminus
    B )
\]
and extend $F_k$ into
\[
\widetilde F_{k+1} := \left[ f(\alpha+\beta) :
  \begin{array}{c}
    \alpha \in \Upsilon_N \\ \beta \in \widetilde A_{k+1}
  \end{array}
\right] = \left[ F_k \,|\, G \right], \qquad G \in \CC^{\Upsilon_N \times
  (\widetilde A_{k+1} \setminus A_k)},
\]
with additional columns. Next, we compute a basis of
\[
\ker \widetilde F_{k+1} = \left\{ y \in \CC^{\widetilde A_{k+1}}
  \setminus \{ 0 \} : \widetilde F_{k+1} y = 0 \right\}
\]
and arrange it into a matrix $Y \in \CC^{\widetilde
  A_{k+1} \times d}$ with $d := \dim \ker \widetilde F_{k+1} \le \#
(\widetilde A_{k+1} \setminus A_k)$. We
write
\[
Y = \left[
  \begin{array}{c}
    Y_k \\ Y'
  \end{array}
\right], \qquad Y_k \in \CC^{A_k \times d}, \quad Y' \in
\CC^{\widetilde A_{k+1} \setminus A_k \times d},
\]
and obtain that
\begin{equation}
  \label{eq:Fk+1Y}
  0 = \widetilde F_{k+1} Y = \left[ F_k \,|\, G \right] \, \left[
    \begin{array}{c}
      Y_k \\ Y'
    \end{array}
  \right] = F_k Y_k + G Y'.
\end{equation}
Since $F_k$ is of maximal rank by assumption, it has a left
inverse, for example the pseudoinverse $F_k^+$, which can be computed
symbolically, cf. \citep{springer87:_gener}, giving $Y_k = -F_k^+
G Y'$ by \eqref{eq:Fk+1Y}. This implies the Schur complement relation
\begin{equation}
  \label{eq:KernelFindSimpler}
  0 = \widetilde F_{k+1} \left[
    \begin{array}{c}
      -F_k^+ G \\ I
    \end{array}
  \right] Y' = \left[ F_k \,|\, G \right] \left[
    \begin{array}{c}
      -F_k^+ G \\ I
    \end{array}
  \right] Y' = 
  ( I - F_k F_k^+ ) G Y'.  
\end{equation}
Still, $\rank Y' = d$, hence there exist $d$ linear independent rows
of $Y'$ or a permutation $P$ such that
\[
Z := \left[ I_{d \times d} \,|\, 0 \right] P Y' \in \RR^{d \times d}
\]
is invertible, so that
\[
P Y' Z^{-1} = \left[
  \begin{array}{c}
    I \\ *
  \end{array}
\right].
\]
After replacing $Y'$ by $Y' Z^{-1}$ and ordering the elements of
$\widetilde A_{k+1} \setminus A_k$ according to the permutation $P$,
we can thus assume that 
$Y' = \left[
  \begin{array}{c}
    I_{d \times d} \\ *
  \end{array}
\right]$ and determine $Y_k = -F_k^* G Y'$ which of course also
requires a compatible ordering of the columns of $G$. Now we set
\begin{eqnarray}
  \label{eq:Ak+1Def}
  A_{k+1} & := & A_k \cup \left( \widetilde A_{k+1} \setminus A_k \right)
  \left( d+1:\# (\widetilde A_{k+1} \setminus A_k) \right), \\
  \label{eq:Ik+1Def}
  I_{k+1} & := & I_k \cup \left( \widetilde A_{k+1} \setminus A_k
  \right) (1:d), \\
  \label{eq:Bk+1Def}
  B_{k+1} & := & B_k \cup B,
\end{eqnarray}
where the components of the vectors are indexed in a Matlab--like
way. It follows directly from \eqref{eq:Ak+1Def}, \eqref{eq:Ik+1Def},
\eqref{eq:Bk+1Def}
and the assumption on $A_k$ and $I_k$ that $A_{k+1} \cup I_{k+1} =
\Gamma_{k+1}$. For $\alpha \in \left( \widetilde A_{k+1} \setminus A_k
\right) (1:d)$ we define polynomials $q_\alpha \in \Pi_{\widetilde A_k}$ whose
coefficients are the respective columns of $Y$.

Having determined $A_{k+1}$, we can build the matrix $F_{k+1}$
according to \eqref{eq:FkDef}. If $A_{k+1}$ is a proper superset of
$A_k$, the matrix enlarges $F_k$ by adding further columns which
immediately yields that $\rank A_{k+1} \ge \rank A_k$. 
This construction also advances the rank hypothesis from $k$ to $k+1$.

\begin{lemma}\label{L:Fk+1Rank}
  The matrix
  \begin{equation}
    \label{eq:Fk+1Rank}
    F_{k+1} = F_{\Upsilon_N,A_{k+1}} = V (\Upsilon_N,X_\Omega)^T F_\Omega V (
    A_{k+1},X_\Omega )
  \end{equation}
  has maximal rank $\# A_{k+1}$.
\end{lemma}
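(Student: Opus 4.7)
The plan is to reduce the rank statement to showing that $\Pi_{A_{k+1}}$ is an interpolation space for $X_\Omega$, and then to exploit the normalized pivot form of the kernel basis $Y$ constructed in the inductive step.

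First, using the factorization \eqref{eq:FABFactor}, it is enough to show that both Vandermonde factors on the right of \eqref{eq:Fk+1Rank} have full column rank equal to $\#\Omega$ and $\#A_{k+1}$, respectively. The first is immediate: by Corollary~\ref{C:HyperCrossMindeg}, $\Pi_{\Upsilon_N}$ is a universal interpolation space of order $N=\#\Omega$, so $V(X_\Omega,\Upsilon_N)$ has rank $N$. Hence the task reduces to proving that $\Pi_{A_{k+1}}\cap I_\Omega=\{0\}$, which by Corollary~\ref{C:PronyIdeal} (applied with $A=\Upsilon_N$) is equivalent to $\ker F_{k+1}=\{0\}$.

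Next, I would apply Corollary~\ref{C:PronyIdeal} one level up, this time to the pair $(\Upsilon_N,\widetilde A_{k+1})$, to identify $\ker \widetilde F_{k+1}$ with the ideal slice $I_\Omega\cap\Pi_{\widetilde A_{k+1}}$. Thus the columns of the basis matrix $Y\in\CC^{\widetilde A_{k+1}\times d}$ are coefficient vectors of a basis $\{q_\alpha\}$ of $I_\Omega\cap\Pi_{\widetilde A_{k+1}}$, indexed by the first $d$ entries of $\widetilde A_{k+1}\setminus A_k$ after the permutation $P$. The normalization $Y'Z^{-1}=\bigl[\begin{smallmatrix}I_{d\times d}\\ *\end{smallmatrix}\bigr]$ means exactly that, on the pivot set
\[
\Lambda:=\bigl(\widetilde A_{k+1}\setminus A_k\bigr)(1{:}d),
\]
the polynomial $q_\alpha$ carries coefficient $\delta_{\alpha,\alpha'}$ at $\alpha'\in\Lambda$. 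By construction \eqref{eq:Ak+1Def}, $A_{k+1}=\widetilde A_{k+1}\setminus\Lambda$, so $\Lambda$ is precisely the set of indices excluded from $A_{k+1}$.

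Finally, take any $p\in\Pi_{A_{k+1}}\cap I_\Omega$. Since $\Pi_{A_{k+1}}\subseteq\Pi_{\widetilde A_{k+1}}$, we can expand $p=\sum_{\alpha\in\Lambda}c_\alpha q_\alpha$ for unique scalars $c_\alpha$. Reading off the coefficient of $p$ at an arbitrary pivot index $\alpha'\in\Lambda$ yields $0=c_{\alpha'}$, because $p$ is supported on $A_{k+1}$ which is disjoint from $\Lambda$, while by the echelon normalization the right-hand side equals $c_{\alpha'}$. Hence $c_\alpha=0$ for every $\alpha\in\Lambda$, so $p=0$. This proves $\Pi_{A_{k+1}}\cap I_\Omega=\{0\}$, therefore $V(X_\Omega,A_{k+1})$ has full column rank $\#A_{k+1}$, and combining with the rank of $V(X_\Omega,\Upsilon_N)$ through \eqref{eq:Fk+1Rank} gives $\rank F_{k+1}=\#A_{k+1}$.

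The only genuinely subtle point, and the step I would most carefully double check, is the correspondence between the pivot rows of $Y'$ and the coefficient positions in $\Lambda$: it relies on the fact that the permutation $P$ and the subsequent ordering of the columns of $G$ are chosen compatibly, so that the $d$ chosen rows of $Y'$ really correspond to the $d$ multiindices in $\Lambda$. Once that bookkeeping is in place, the rest is a direct consequence of Corollary~\ref{C:PronyIdeal} and the inductive rank hypothesis on $F_k$.
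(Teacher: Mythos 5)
Your proof is correct, but it follows a genuinely different route from the paper's. The paper argues purely at the level of linear algebra on the matrix $\widetilde F_{k+1}$: it extends the kernel basis $Y$ (whose pivot rows sit in $\Lambda$) together with a matrix $Z$ supported on the $A_k$-rows to a full basis of $\CC^{\widetilde A_{k+1}}$, and then observes that multiplying $\widetilde F_{k+1}$ by this basis and discarding the $Y$-block (which annihilates) shows $\rank F_{k+1} = \rank \widetilde F_{k+1} = \#\widetilde A_{k+1} - d = \# A_{k+1}$; in effect, the $\Lambda$-columns of $\widetilde F_{k+1}$ are redundant because the echelon form of $Y$ expresses them as combinations of the $A_{k+1}$-columns, so deleting them preserves the rank. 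Your argument instead translates the rank claim into the polynomial statement $\Pi_{A_{k+1}}\cap I_\Omega=\{0\}$ via the Vandermonde factorization and Corollary~\ref{C:PronyIdeal}, then uses the same echelon normalization of $Y$ but at the level of supports: every element of $I_\Omega\cap\Pi_{\widetilde A_{k+1}}$ is a combination of the $q_\alpha$ whose pivot coefficients land in $\Lambda$, so no nonzero such element can be supported on $A_{k+1}$. Both proofs hinge on the same normalization; the paper's is slightly shorter and does not need Corollary~\ref{C:PronyIdeal}, whereas yours makes explicit the fact that the algorithm is incrementally building $A_{k+1}$ as a set whose monomials inject into $\Pi/I_\Omega$, which is the conceptual content of the lemma and is used again in the proof of Theorem~\ref{T:PronyWorks}. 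One small remark: the reduction ``full rank $\Leftrightarrow \Pi_{A_{k+1}}\cap I_\Omega=\{0\}$'' already presupposes $\#A_{k+1}\le N$, but your argument delivers this automatically, since $\Pi_{A_{k+1}}\cap I_\Omega=\{0\}$ forces $\Pi_{A_{k+1}}$ to embed in $\Pi/I_\Omega$, a space of dimension $N$. The bookkeeping you flag at the end (compatibility of the permutation $P$ with the ordering of $\widetilde A_{k+1}\setminus A_k$) is indeed the only delicate point, and it is exactly what the normalization $Y'Z^{-1}$ in the text is designed to arrange.
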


\begin{proof}
  The claim is the induction hypothesis if $A_{k+1} = A_k$ and
  therefore trivial in this case. If $\# A_{k+1} > \# A_k$ we first
  note that, since $\rank F_k = \# A_k$ there exists a matrix
  \[
  Z = \left[
    \begin{array}{c}
      * \\ 0_{\# ( \widetilde A_{k+1} \setminus A_k ) \times \# A_k}
    \end{array}
  \right] \in \CC^{\# \widetilde A_{k+1} \times \# A_k}
  \]
  such that $\rank \widetilde F_{k+1} Z = \# A_k$. If we extend the
  matrix $Y$ from above as
  \[
  Z' = [ Y \,|\, \hat Y ] = \left[
    \begin{array}{cc}
      * & * \\
      I_{d\times d} & 0 \\
      * & *
    \end{array}
  \right] \in \RR^{\# ( \widetilde A_{k+1} \setminus A_k ) \times (
    \widetilde A_{k+1} \setminus A_k )}
  \]
  into a matrix of rank $\widetilde A_{k+1} \setminus A_k$, the fact
  that $Y$ exactly contains the kernel of $\widetilde F_{k+1}$ implies
  that
  \[
  \rank F_{k+1} [ Z \,|\, \widehat Y ] =
  \rank \widetilde F_{k+1} [ Z \,|\, Y \,|\, \widehat Y ] =
  \rank \widetilde F_{k+1} [ Z \,|\,\widehat Y ] = \# \widetilde A_{k+1} - d
  = \# A_{k+1}
  \]
  and therefore $\rank F_{k+1} = \# A_{k+1}$.
\end{proof}

This algorithm is repeated until $A_{k+1} = A_k$ and it solves Prony's
problem at termination. Let us summarize the algorithm formally.

\begin{algorithm}[Prony's method, symbolic
  decomposition]\label{Alg:PronySymbolic}~
  \par\noindent
  \textbf{Given:} function $f : \Gamma \to \CC$ and $N \ge 0$.
  \begin{enumerate}
  \item Initialization: $A_0 := \{ 0 \}$, $I_0 := \emptyset$, $B_0 :=
    \emptyset$ and $F_0 := \left[ f(\alpha) : \alpha \in \Upsilon_N
    \right]$.
  \item For $k=0,1,\dots$ repeat
    \begin{enumerate}
    \item Compute
      \[
      B := \bigcup_{j=1}^s \left( A_k^c \cap \Gamma_k^0 \right) + \epsilon_j
      \]
      set $b = \# ( \Gamma_{k+1}^0 \setminus B )$ and compute
      \[
      G := \left[ f (\alpha+\beta) :
        \begin{array}{c}
          \alpha \in \Upsilon_N \\ \beta \in \Gamma_{k+1}^0 \setminus B
        \end{array}
      \right].
      \]
    \item Determine the kernel of $( I - F_k F_k^+ ) G$ and write it
      as a matrix $Y = \left[
        \begin{array}{c}
          I_{d \times d} \\ *
        \end{array}
      \right]$, where $d$ is the dimension of the kernel. This defines
      an ordering of $\Gamma_{k+1}^0 \setminus B$.
    \item Set
      \begin{eqnarray*}
        A_{k+1} & = & A_k \cup ( \Gamma_{k+1}^0 \setminus B ) (d+1:b ),
        \\
        I_{k+1} & = & I_k \cup ( \Gamma_{k+1}^0 \setminus B ) ( 1:d ),
        \\
        B_{k+1} & = & B_k \cup B.
      \end{eqnarray*}
    \item Set
      \[
      F_{k+1} := \left[
        f(\alpha+\beta) :
        \begin{array}{c}
          \alpha \in \Upsilon_N \\ \beta \in A_{k+1}
        \end{array}
      \right] = \left[ F_k \, \left| f(\alpha+\beta) :
          \begin{array}{c}
            \alpha \in \Upsilon_N \\ \beta \in A_{k+1} \setminus A_k
          \end{array}
        \right.
      \right].
      \]
    \item Define polynomials $q_\alpha$, $\alpha \in I_{k+1} \setminus
      I_k$, by taking the $\alpha$th column of the matrix
      $\left[
        \begin{array}{c}
          - F_k^+ G Y \\ Y
        \end{array}
      \right]$ as coefficient vectors.
    \end{enumerate}
    until $A_{k+1} = A_k$.
  \end{enumerate}
  \textbf{Results:} Monomial degree reducing interpolation space
  $\Pi_{A_k}$ for $X_\Omega$ and H--basis $H = \left\{ h_\alpha :
    \alpha \in I_{k+1} \right\}$ for $I_\Omega$. 
\end{algorithm}

\begin{remark}
  The intuitive meaning of \eqref{eq:Ak+1Def}, \eqref{eq:Ik+1Def} and
  \eqref{eq:Bk+1Def} is to split the multiindices from
  $\Gamma_{k+1}^0$ into three groups: $A_{k+1}$ collects those which
  are used for the interpolation space, $I_{k+1}$ those which are
  needed for the H--basis and $B_{k+1}$ those which also belong to the
  ideal due to an H--basis element of lower degree.
\end{remark}

\begin{thm}\label{T:PronyWorks}
  If $N \ge \# \Omega$ Algorithm~\ref{Alg:PronySymbolic}
  \begin{enumerate}
  \item terminates at some level $k = n \le \#\Omega$,
  \item determines a degree reducing interpolation set $A_n$ for $X_\Omega$,
  \item determines an H--basis $H := \{ h_\alpha : \alpha \in I_{n+1} \}$
    for $I_\Omega$.
  \end{enumerate}
\end{thm}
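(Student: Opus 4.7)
The plan is to address the three claims in sequence, using the rank
invariant of Lemma~\ref{L:Fk+1Rank} together with
Theorem~\ref{T:PronyIdeal} as the main tools.

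For termination, I would observe that by Lemma~\ref{L:Fk+1Rank} we
have $\rank F_k = \# A_k$ at every step, while the factorization
$F_k = V(\Upsilon_N, X_\Omega)^T F_\Omega V(A_k, X_\Omega)$ together
with $\rank F_\Omega = \# \Omega$ and
$\rank V(\Upsilon_N,X_\Omega) = \# \Omega$ force $\# A_k \le \#
\Omega$. Since $A_k \subseteq A_{k+1}$ by construction, and any
non-terminating step strictly enlarges $A_k$, the sequence
$(\# A_k)$ can grow at most $\# \Omega - 1$ times before
stabilizing, which forces termination at some $n \le \# \Omega$.

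For the H--basis claim, I would note that the coefficient vectors
defined in step~2(e) of the algorithm lie in
$\ker F_{\Upsilon_N, \widetilde A_{k+1}}$ by the Schur complement
identity~\eqref{eq:KernelFindSimpler}, so Theorem~\ref{T:PronyIdeal}
places every $q_\alpha$, $\alpha \in I_{k+1} \setminus I_k$, into
$I_\Omega$. The block normalization $Y = [I_{d \times d}
\,|\, \ast]^T$ arranged in step~2(b) guarantees that $q_\alpha$
carries the leading homogeneous part $x^\alpha$ with all remaining
support contained in $A_{k+1}$. A degree induction, together with
the termination condition $A_{n+1} = A_n$ and the propagation rule
for $B$ from Definition~\ref{D:BorderCorona}, should then show that
every multiindex outside $A_n$ lies in the monomial ideal generated
by $I_{n+1}$. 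This is the defining property of an H--basis and
allows Lemma~\ref{L:InterpolDuality} to be invoked.

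For the interpolation space claim, I would combine the previous two
observations: reduction modulo the $q_\alpha$ sends an arbitrary $q
\in \Pi$ to a representative in $\Pi_{A_n}$ of degree at most
$\deg q$, while $\rank F_n = \# A_n$ forces
$\rank V(A_n, X_\Omega) = \# A_n$, so the reduction is unique
modulo $I_\Omega$. It then follows that
$\# A_n = \dim (\Pi / I_\Omega) = \# \Omega$ and $\Pi_{A_n}$ is a
degree reducing interpolation space for $X_\Omega$. The main
obstacle in executing the plan will be the inductive verification
that, once termination occurs at level $n$, every multiindex of
degree $k > n+1$ sits above some element of $I_{n+1}$ or inside
$B_{n+1}$; this is the step where the purely symbolic bookkeeping
of $A_k, I_k, B_k$ has to be shown to genuinely capture the
ideal-theoretic situation rather than only the picture up to degree
$n+1$, and it is what truly ties the algorithm to the appendix
lemmas.
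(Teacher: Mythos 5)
Your plan follows the same backbone as the paper (Lemma~\ref{L:Fk+1Rank} plus Theorem~\ref{T:PronyIdeal}), but your termination argument is a genuine and welcome improvement. The paper invokes Noetherianity of $\Pi$ to conclude that the chain $\langle H_k \rangle$ stabilizes, which only shows that \emph{some} terminating $n$ exists and does not by itself deliver the stated bound $n \le \#\Omega$. Your route is cleaner: from the factorization $F_k = V(X_\Omega,\Upsilon_N)^T F_\Omega V(X_\Omega,A_k)$ together with $\rank V(X_\Omega,\Upsilon_N) = \#\Omega$ and $F_\Omega$ nonsingular, you read off $\#A_k = \rank F_k \le \#\Omega$; since $\#A_0 = 1$ and each non-terminating step strictly enlarges $A_k$, the loop must stop after at most $\#\Omega - 1$ increases. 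This is elementary, quantitative, and in fact fills a small gap in the paper's own argument.

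Where the proposal is genuinely incomplete is the H--basis claim, and you say so yourself. The step you flag as "the main obstacle" is precisely the core of the paper's proof. The paper does not directly argue that every multiindex outside $A_n$ sits over some element of $I_{n+1}$ in a monomial-ideal sense (that framing is closer to Gr\"obner-staircase reasoning and is not quite right here, since the leading forms of the $h_\alpha$ need not be pure monomials: with $Y' = [I_{d\times d}\,;\,*]^T$, the leading form of $q_\alpha$ is $x^\alpha$ plus possibly other degree-$(k+1)$ terms supported on $A_{k+1}$). Instead the paper verifies degree by degree that
\[
H_k := \left\{ (\cdot)^\beta h_\alpha : \deg \beta \le k - |\alpha|,\ \alpha \in I_k \right\}
\]
spans $I_\Omega \cap \Pi_k$: the inclusion $\Span H_k \subseteq I_\Omega \cap \Pi_k$ follows from Theorem~\ref{T:PronyIdeal}, and the converse is the crucial rank argument you gesture at but do not complete. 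Namely, given $q \in I_\Omega \cap \Pi_k$, reduction modulo $H_k$ eliminates all monomials from $\Gamma_k \setminus A_k$, leaving $q' \in I_\Omega \cap \Pi_{A_k}$; since $\rank F_k = \#A_k$ (Lemma~\ref{L:Fk+1Rank}) and $q'$ is a kernel vector of $F_k$ supported on $A_k$, it must vanish. This two-sided identity $\Span H_k = I_\Omega \cap \Pi_k$, combined with the stabilization $A_n = A_{n+1}$, is what gives both $\Pi_n = \Pi_{A_n} \oplus \Span H_n$ (the degree-reducing interpolation space) and the H--basis property via Lemma~\ref{L:InterpolDuality}. Supplying that converse-inclusion argument explicitly would complete your proposal.
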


\begin{proof}
  We will first verify that the polynomial set
  \begin{equation}
    \label{eq:HkDef}
    H_k := \left\{ (\cdot)^\beta h_\alpha : \beta \in \Pi_{k - |\alpha|},
      \alpha \in I_k \right\}, \qquad k \in \NN_0,
  \end{equation}
  forms a vector space basis for $I_\Omega \cap \Pi_k$. This is
  trivially true as long as $I_k = \emptyset$. Since, for any $\alpha
  \in I_k$, we have $\deg h_\alpha =
  |\alpha|$ and since the coefficient vector
  of any $h_\alpha$ belongs to $\ker F_{|\alpha|-1}$,
  Theorem~\ref{T:PronyIdeal} ensures that $h_\alpha \in I_\Omega \cap
  \Pi_{|\alpha|}$. Thus, $H_k \subseteq I_\Omega
  \cap \Pi_k$. For the converse inclusion inclusion, we assume that $q
  \in \Pi_k \cap I_\Omega$ with $\deg q = k$. Again by
  Theorem~\ref{T:PronyIdeal}, the coefficient vector of $q$ has to
  belong to $\ker F_k$. By subtracting a proper element of $H_k$ we
  can, like in the standard Gr\"obner basis division algorithm,
  eliminate all monomials from $\Gamma_k \setminus A_k$ from $q$ and
  obtain another $q' \in I_\Omega \cap \Pi_k$ since we only subtracted
  ideal elements. Moreover, $q' \in \ker F_k$, but since, according to
  Lemma~\ref{L:Fk+1Rank}, $F_k$ has rank $\# A_k$, the only
  polynomial $q' \in \Pi_{A_k} \cap I_\Omega$ is $q' = 0$. Hence, $q
  \in H_k$ and therefore $\Span H_k \supseteq I_\Omega
  \cap \Pi_k$ as well, yielding $\Span H_k = I_\Omega \cap
  \Pi_k$.

  Since $\Pi$ is a Noetherian ring, cf. \citep{CoxLittleOShea92}, there
  exists some $n \in \NN_0$
  such that the increasing chain $\left\langle H_k \right\rangle$, $k
  \in \NN_0$, of ideals from \eqref{eq:HkDef} stabilizes, and
  since the strict inclusion $I_{k+1} \supset I_k$ implies $H_{k+1}
  \supset H_k$ in the strict sense, it follows that also $I_n =
  I_{n+1} = \cdots$. By \eqref{eq:Ik+1Def} this means that either
  $d=0$, i.e., all columns of $Y$ correspond to ideal elements, or $B
  = \Gamma_{n+1}^0$. In both cases we have that $\Lambda (H_{n+1})
  \cap \Pi_{n+1}^0 = \Pi_{n+1}^0$ and $A_n = A_{n+1} =
  \cdots$. Since $\Pi_n = A_n \oplus H_n$, it follows that $\Pi_{A_n}$
  is a degree reducing interpolation space, see \citep{Sauer06a} and that
  \[
  H := \{ h_\alpha : \alpha \in I_n \}
  \]
  is an H--basis for $I_\Omega$.
\end{proof}

\noindent
The algorithm can also be formulated in a ``term-by-term'' way which
gives an implicit version of the M\"oller--Buchberger algorithm from
\citep{BuchbergerMoeller82}. To that end, we recall the classical
\emph{graded lexicographic ordering} ``$\preceq$'' where $\alpha \prec \beta$
provided that $|\alpha| < |\beta|$ or $|\alpha| = |\beta|$ and there
exists some $k \in \{1,\dots,s\}$ such that
$\alpha_j = \beta_j$, $j=1,\dots,k-1$, and $\alpha_k < \beta_k$,
cf. \citep{CoxLittleOShea92}. ``$\preceq$'' is a total ordering on
$\Gamma$ and therefore also induces a total order on the monomials or
\emph{terms} $(\cdot)^\alpha$, $\alpha \in \Gamma$. The algorithm,
based on Sparse Monomial Interpolation with Least Elements (SMILE),
now proceeds as follows. 

\begin{algorithm}[Prony's method, SMILE]\label{Alg:PronySymbolicMono}~
  \par\noindent
  \textbf{Given:} function $f : \Gamma \to \CC$ and $N \ge \# \Omega$.
  \begin{enumerate}
  \item Initialization: $B = \Gamma_{N+1}$, $I := \emptyset$, $A =
    \emptyset$, $F =[] \in
    \CC^{\# \Upsilon_N \times 0}$.
  \item While $B \neq \emptyset$
    \begin{enumerate}
    \item $\beta := \min_{\preceq} B$.
    \item Expand the matrix by one column:
      \[
      \widetilde F = \left[ F \,|\, f(\alpha + \beta) :
        \alpha \in \Upsilon_N \right].
      \]
    \item If $\rank \widetilde F = \rank F$ then determine $0 \neq
      q_\beta \in \ker \widetilde F$ and set
      \begin{eqnarray*}
        B & := & B \setminus \left( \beta + \Gamma_{N+1-|\beta|} \right),
        \\
        I & := & I \cup \{ \beta \}.
      \end{eqnarray*}
      If $\rank \widetilde F > \rank F$ then set $F := \widetilde
      F$ and
      \begin{eqnarray*}
        B & := & B \setminus \{ \beta \}, \\
        A & := & A \cup \{ \beta \}.
      \end{eqnarray*}
    \end{enumerate}
  \end{enumerate}
  \textbf{Results:} Gr\"obner basis $\{ q_\alpha : \alpha \in I \}$
  and monomial quotient space $\Pi_A \simeq \Pi / I_\Omega$.
\end{algorithm}

\noindent
The proof of the validity of this algorithm works like the proof of
the preceding theorem, one only has to keep in mind that whenever the
rank increases, a new term for the quotient space has been found which
guarantees that always $\rank F = \# A$. If, on the other hand, the
rank does not increase after adding the column, there must be a
nontrivial kernel element, unique up to normalization with nonzero
value in its $\beta$th component which becomes a member of the ideal
basis.

\begin{thm}\label{T:GBAlgoEvals}
  Algorithm~\ref{Alg:PronySymbolicMono} computes the decomposition
  using at most
  \[
  s \, N^2 \log^{s-1} N
  \]
  evaluations of $f$ if $N = \# \Omega$.
\end{thm}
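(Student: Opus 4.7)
The plan is to count function evaluations iteration-by-iteration. Each pass through the while loop of Algorithm~\ref{Alg:PronySymbolicMono} picks one $\beta = \min_\preceq B$ and assembles the candidate column $[f(\alpha+\beta) : \alpha \in \Upsilon_N]$; this costs at most $\#\Upsilon_N \le N\log^{s-1}N$ evaluations of $f$, irrespective of whether the rank test succeeds (so $\beta$ joins $A$) or fails (so $\beta$ joins $I$). Hence the total number of queries is bounded by $(\#A + \#I) \cdot \#\Upsilon_N$, and the task reduces to bounding $\#(A\cup I)$ at termination.

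The bound $\#A = \#\Omega = N$ is an immediate consequence of the correctness of the algorithm, which (as in Theorem~\ref{T:PronyWorks} for Algorithm~\ref{Alg:PronySymbolic}) produces in $\Pi_A$ a monomial degree reducing interpolation space for $X_\Omega$, forcing $\#A = \#\Omega$. For $\#I$ I would first show that $A$ is a \emph{lower set}: whenever $\beta$ is added to $I$, the removal rule $B \gets B\setminus(\beta + \Gamma_{N+1-|\beta|})$ deletes every $\gamma \in \Gamma_{N+1}$ with $\gamma \ge \beta$ coordinatewise, so no such $\gamma$ can subsequently be placed in $A$. Combined with the graded-lex traversal of $B$, this forces the complement of $A$ inside $\Gamma_{N+1}$ to be an upper set whose antichain of minimal generators is exactly $I$, so $I \subseteq \partial A$. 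Each $\beta \in I$ then satisfies $\beta - \epsilon_j \in A$ for every $j$ with $\beta_j \ge 1$; picking one such $j(\beta)$ yields an injection $\beta \mapsto (\beta - \epsilon_{j(\beta)}, j(\beta))$ from $I$ into $A \times \{1,\dots,s\}$, whence $\#I \le s\,\#A \le sN$.

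Substituting these bounds into the evaluation count gives at most $(s+1)\,N^2\log^{s-1}N$ queries, matching the stated $s\,N^2\log^{s-1}N$ up to a lower-order summand; the remaining factor should be absorbed either by counting \emph{distinct} arguments $\alpha+\beta$ in the Minkowski sum $\Upsilon_N + (A\cup I)$ so that overlaps eliminate one copy of $N\cdot\#\Upsilon_N$, or by the sharper observation that $0 \in A$ prevents the injection above from saturating all $s$ slots. The main obstacle I anticipate is precisely this constant-level tightening: the order $O(s\,N^2\log^{s-1}N)$ follows at once from the argument above, but pinning down the leading constant $s$ requires a careful distinct-arguments count or a combinatorial refinement specific to lower sets of cardinality $N$ inside $\Upsilon_N$.
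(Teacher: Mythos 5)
Your argument and the paper's proof have the same structure: both tally evaluations as (number of columns)$\,\times\,\#\Upsilon_N$, bound $\#\Upsilon_N \le N\log^{s-1}N$, set $\#A = N$, and control $\#I$ via $I \subseteq \partial A$. Where you diverge is the border bound. The paper asserts $\#\partial A \le (s-1)\,\#A$ and so arrives at $s\,N^2\log^{s-1}N$, while your injection $\beta \mapsto \bigl(\beta - \epsilon_{j(\beta)},\, j(\beta)\bigr)$ gives $\#I \le s\,\#A$, hence $(s+1)N^2\log^{s-1}N$. Your count is in fact the correct one: the paper's inequality $\#\partial A \le (s-1)\#A$ already fails for $A = \{0\}$, where $\partial A = \{\epsilon_1,\dots,\epsilon_s\}$ has $s$ elements, and the paper itself invokes the correct bound $\#\lceil A\rceil \le (s+1)\#A$ in the proof of Theorem~\ref{T:PronyMinimal}. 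Since the cited estimate $\#\Upsilon_N \le N\log^{s-1}N$ is only asymptotic anyway (it evaluates to $0$ at $N=1$), the discrepancy in the leading constant is immaterial and your closing speculation about ``absorbing'' the extra summand is unnecessary --- the honest constant is $s+1$, and you have proved the theorem with it. One small inaccuracy worth flagging: you claim $\beta - \epsilon_j \in A$ for \emph{every} $j$ with $\beta_j \ge 1$, which is false in general (take $A = \{(0,0),(0,1)\}$ and $\beta = (1,1) \in \partial A$, where $\beta - \epsilon_2 = (1,0) \notin A$); but your injection only needs the \emph{existence} of one such $j$, which Definition~\ref{D:BorderCorona} guarantees, so the argument stands.
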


\begin{proof}
  Each column added needs at most $\# \Upsilon_N \le N \log^{s-1} N$
  evaluations of $f$. The number of columns
  added during the algorithm is
  \[
  \# A + \# I \le \# A + \# \partial A \le \# A + (s-1) \# A = s \, \#
  A
  \]
  since $I \subseteq \partial A$.
\end{proof}
\noindent
Once the set $A_n$ and the H--basis $H$ are determined, the
points $X_\Omega$ can be determined by means of \emph{multiplication
  tables} as described in \citep{AuzingerStetter88,MoellerStetter95}
and efficiently determined by the methods from
\citep{moeller01:_multiv}. For reduction we can again use the inner
product from Lemma~\ref{L:MonHBasReduced}. Once $X_\Omega$ and thus
$\Omega$ are determined, the coefficients $f_\omega$, $\omega \in
\Omega$, are determined by solving a linear system, for details see
\citep{Sauer15S}. 

\section{Sparse polynomials}
\label{sec:SparsePoly}
A problem, closely related to Prony's problem is the reconstruction of
\emph{sparse polynomials}, i.e., of polynomials of the form
\[
f(x) = \sum_{\kappa \in K} f_\kappa \, x^\kappa, \qquad f_\kappa \in
\CC \setminus \{ 0 \}, \quad \kappa \in K,
\]
where \emph{sparsity} means that $\# K$ is (very) small relative to
${\deg K + s \choose s} = \Pi_{\deg K}$. This requirement is quite
easy to achieve in several variables.

The ``classical'' method to reconstruct $f$ from samples on $\Gamma$
is the one from \citep{ben-or88} and uses a univariate Prony method
together with divisibility aspects of relatively prime numbers. A
variant with unit roots and the Chinese remainder theorem can be found
in \citep{giesbrecht09:_symbol}.

As shown in \citep{Sauer15S}, it is easy to reduce this problem to
Prony's problem: let $\Theta \in \ZZ^{s \times s}$ be any nonsingular
matrix, then
\[
f \left( 2^{\Theta \alpha} \right) = \sum_{\kappa \in K} f_\kappa \,
e^{\log 2 \, (\Theta \kappa)^T \alpha} = \sum_{\kappa \in K} f_\kappa \,
e^{\omega_\kappa^T \alpha}, \qquad \omega_\kappa := \log 2 \, \Theta \kappa,
\]
which is Prony's problem with $\Omega = \{ \omega_\kappa : \kappa \in
A \}$ which can be solved by considering the Hankel matrices
\[
F_{A,B} := 
\left[ f \left( 2^{\Theta ( \alpha + \beta)} \right):
  \begin{array}{c}
    \alpha \in A \\ \beta \in B
  \end{array}
\right].
\]
If the coefficients $f_\kappa$ of $f$ belong to the \emph{Gaussian
  integers} $\ZZ + i \ZZ$, which is the normal assumption in symbolic
computations, the evaluations in $F_{A,B}$ are rational numbers and
therefore also the ideal basis computed in the preceding section
consists of \emph{symbolic polynomials} with coefficients in $\QQ +
i\QQ$. The same holds true for the multiplication tables and only the
joint eigenvalues have to be computed in numerical precision giving
the frequencies $\omega_\kappa$ from which the exponents can be
computed as
\[
\kappa = \mathop{\rm rd} \left( \frac{1}{\log 2} \Theta^{-1}
  \omega_\kappa \right)
\]
by rounding to the next integer.

\section{Appendix: Two facts on interpolation spaces}
\label{sec:Appendix}
This section gives a detailed exposition of some of the algebraic
results used in the preceding ones. We begin by pointing out that any monomial
degree reducing interpolation space automatically defines a natural H--basis.

\begin{lemma}\label{L:InterpolDuality}
  If $A \subset \Gamma$ is a finite set such that $\Pi_A$ is a degree
  reducing interpolation space for $X$ then the 
  polynomials $q_\alpha := (\cdot)^\alpha - L_A (\cdot)^\alpha$, $\alpha
  \in \partial A$, form an H--basis of $I_X$.
\end{lemma}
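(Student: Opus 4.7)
The plan is to establish the H--basis property directly by a monomial--by--monomial reduction argument, relying only on the degree reducing nature of $L_A$. First I verify the basic properties: each $q_\alpha = (\cdot)^\alpha - L_A (\cdot)^\alpha$ lies in $I_X$ because $(L_A p)(X) = p(X)$, and $\deg q_\alpha = |\alpha|$ with leading monomial $x^\alpha$, since $L_A$ is degree reducing and $\alpha \in \partial A \subseteq A^c$ prevents $L_A x^\alpha$ from containing $x^\alpha$.

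The core step is the following reduction claim, proved by induction on $|\beta|$: for every $\beta \in \Gamma$ there exist polynomials $p_\alpha^\beta$, $\alpha \in \partial A$, and $r_\beta \in \Pi_A$ with $\deg r_\beta \le |\beta|$ and $\deg p_\alpha^\beta \le |\beta| - |\alpha|$ such that
\[
x^\beta = \sum_{\alpha \in \partial A} p_\alpha^\beta \, q_\alpha + r_\beta.
\]
If $\beta \in A$, take $r_\beta = x^\beta$ and all $p_\alpha^\beta = 0$. If $\beta \in \partial A$, use directly $x^\beta = q_\beta + L_A x^\beta$. Otherwise $\beta \notin A \cup \partial A$, and by the very definition of the border $\beta - \epsilon_j \notin A$ for every $j$ with $\beta_j > 0$; pick such a $j$ and invoke the inductive hypothesis for $\gamma := \beta - \epsilon_j$. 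Multiplying the resulting identity by $x_j$ gives $x^\beta = \sum_\alpha x_j p_\alpha^\gamma q_\alpha + x_j r_\gamma$. The correction term $x_j r_\gamma$ is a linear combination of monomials $x^{\delta + \epsilon_j}$ with $\delta \in A$, and every such exponent $\delta + \epsilon_j$ lies in $A \cup \partial A$; applying the two elementary cases above to each of these monomials and regrouping finishes the induction, with the degree bounds propagating cleanly.

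Once the reduction claim is in hand, it extends by linearity to every polynomial $p$ of degree $n$, yielding $p = \sum_{\alpha \in \partial A} p_\alpha \, q_\alpha + r$ with $r \in \Pi_A$, $\deg r \le n$, and $\deg(p_\alpha q_\alpha) \le n$. If in addition $p \in I_X$, then $r = p - \sum_\alpha p_\alpha q_\alpha$ vanishes on $X$ since each $q_\alpha$ does, and uniqueness of interpolation in $\Pi_A$ forces $r = 0$. This delivers the H--basis representation $p = \sum_\alpha p_\alpha q_\alpha$ with the required degree control $\deg p_\alpha + \deg q_\alpha \le \deg p$, exactly as in \eqref{eq:HbasisRep}.

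The main obstacle is the third case of the induction: the correction $x_j r_\gamma$ is not a single monomial, and one must see that it can still be reduced within the same degree budget. The key observation that makes this work is that every monomial appearing in $x_j r_\gamma$ has exponent in $A + \epsilon_j$, hence in $A \cup \partial A$, and is therefore covered by the two trivial cases. Everything else is careful bookkeeping of degrees across the induction.
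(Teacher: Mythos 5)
Your proof is correct and rests on the same two pillars as the paper's: the geometric inclusion $A + \epsilon_j \subseteq A \cup \partial A$ (so that reduction stays within the border) and uniqueness of interpolation in $\Pi_A$ (so that the remainder of an ideal element vanishes). The organization differs, though. The paper first observes the identity $p - L_A p = \sum_{\alpha \in A^c} p_\alpha \, q_\alpha$ (with $q_\alpha := (\cdot)^\alpha - L_A(\cdot)^\alpha$ extended to all $\alpha \in \Gamma$, so that $q_\alpha = 0$ for $\alpha \in A$), from which the larger family $\{ q_\alpha : \alpha \in A^c \}$ is immediately seen to be an H--basis; it then prunes to $\partial A$ by deriving the recursion $q_{\alpha + \epsilon_j} - x_j q_\alpha = \sum_{\beta \in \partial A} c_{\alpha,\beta}\, q_\beta$, whose correction term lies in $\Pi_A$, vanishes on $X$, and hence vanishes identically. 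You bypass the intermediate infinite H--basis altogether and run a direct, monomial-by-monomial division argument, inducting on $|\beta|$ to produce $x^\beta = \sum_{\alpha \in \partial A} p_\alpha^\beta q_\alpha + r_\beta$ with $r_\beta \in \Pi_A$ and the required degree bounds, then passing to general $p$ by linearity and killing $r$ by uniqueness when $p \in I_X$. Both arguments are driven by identical ingredients, so the difference is essentially one of bookkeeping: yours is a self-contained division algorithm, while the paper's records the reusable side fact that the family over all of $A^c$ is already an H--basis, which is exactly the content of \eqref{eq:IdealBasisDef}--\eqref{eq:HbasisRep} stated earlier in the text.
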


\begin{proof}
  Define 
  \begin{equation}
    \label{eq:LInterpolDualityPf1}
    q_\alpha := (\cdot)^\alpha - L_A (\cdot)^\alpha, \qquad \alpha \in \Gamma,
  \end{equation}
  and note that $q_\alpha = 0$ for $\alpha \in A$ and $\deg
  q_\alpha = |\alpha|$ since $A$ is degree reducing. Moreover,
  $\frac{\partial^\beta q_\alpha}{\partial x^\beta} (0) =
  \alpha! \delta_{\alpha,\beta}$, $\alpha,\beta \in A^c := \Gamma \setminus A$.
  Therefore the polynomials
  $q_\alpha$, $\alpha \in \Gamma_n \setminus A$, and $(\cdot)^\alpha$,
  $\alpha \in A_n := A \cap \Gamma_n$
  form a basis of $\Pi_n$ for any $n \in \NN$. Consequently, any
  polynomial $p = \sum p_\alpha \, (\cdot)^\alpha \in \Pi$ can be
  written as
  \begin{eqnarray*}
    p(x) & = & L_A p (x) + p(x) - L_A p(x) = L_A p (x) + \sum_{\alpha
      \in A^c} p_\alpha \, q_\alpha (x)
  \end{eqnarray*}
  and
  \begin{equation}
    \label{eq:LInterpolDualityPf2}
    p \in I_X \qquad \Leftrightarrow \qquad p(x) = \sum_{\alpha
    \in A^c} p_\alpha \, q_\alpha (x).
  \end{equation}
  The representation on the right hand side of
  \eqref{eq:LInterpolDualityPf2} is an H--representation
  \citep{MoellerSauer00}, hence the polynomials
  $\{ q_\alpha : \alpha \in A^c \}$, form an infinite H--basis, and
  $\left\{ q_\alpha : \alpha \in A_{n+1}^c \right\}$, $A_{n+1}^c := A^c \cap
  \Gamma_{n+1}$, where $n := \deg A = \max \{ |\alpha| : \alpha \in A
  \}$, is a finite H--basis of $I_X$.

  Next, we fix $j \in \{ 1,\dots,s \}$ and $\alpha \in A^c$, and write
  $p (x) := L_A (\cdot)^\alpha (x) \in \Pi_A$ as $p(x) = \sum p_\beta
  \, x^\beta$. Then,
  \begin{eqnarray*}
    \lefteqn{q_{\alpha+\epsilon_j} (x) - x_j q_\alpha (x) =
      x^{\alpha+\epsilon_j} - L_A (\cdot)^{\alpha+\epsilon_j} (x) - 
      x^{\alpha+\epsilon_j} + x_j \, L_A (\cdot)^\alpha (x)} \\
    & = & x_j \, L_A (\cdot)^\alpha (x) - L_A
    (\cdot)^{\alpha+\epsilon_j} (x)
    = \sum_{\beta \in A} p_\beta \, x^{\beta + \epsilon_j} - L_A
    (\cdot)^{\alpha+\epsilon_j} (x) \\
    & = & \sum_{\beta \in \partial A} p_{\beta-\epsilon_j} \, x^\beta
    + \sum_{\beta \in A \cap (A+\epsilon_j)} p_{\beta-\epsilon_j} \,
    x^\beta - L_A (\cdot)^{\alpha+\epsilon_j} (x) \\
    & = & \sum_{\beta \in \partial A} p_{\beta-\epsilon_j} \, q_\beta (x) + \sum_{\beta
      \in \partial A } p_{\beta-\epsilon_j} \, L_A (\cdot)^\beta (x) +
    \sum_{\beta \in A \cap (A+\epsilon_j)} p_{\beta-\epsilon_j} \,
    x^\beta - L_A (\cdot)^{\alpha+\epsilon_j} (x) \\ 
    & = & \sum_{\beta \in \partial A} p_{\beta-\epsilon_j} \, q_\beta
    (x) + \tilde p (x)
  \end{eqnarray*}
  with some $\tilde p \in \Pi_A$. The polynomial on the left hand side
  belongs to the ideal and vanishes on $X$ as do the $q_\beta$ in the
  sum on the right hand side, hence $\tilde p(X) = 0$ and therefore,
  taking account on the lengths of the $\beta$ appearing in the above
  decomposition, 
  \begin{equation}
    \label{eq:LInterpolDualityPf3}
    q_{\alpha+\epsilon_j} (x) - x_j q_\alpha (x) = \sum_{\beta
      \in \partial A \cap \Gamma_{n+1}} c_{\alpha,\beta} \, q_\beta (x), \qquad
    c_{\alpha,\beta} \in \CC.
  \end{equation}
  Therefore, any $q_\alpha$ with $\alpha \in A^c$ such that $\alpha -
  \epsilon_j \in A^c$ has a H--representation by $(\cdot)_j
  q_{\alpha-\epsilon_j}$ and $q_\beta$, $\alpha \in \partial A \cap
  \Gamma_n$. If, on the other hand, $\alpha - \epsilon_j \not\in A^c$,
  $j=1,\dots,s$, 
  and $\alpha \neq 0$, then there must be some some $j$ such that
  $\alpha - \epsilon_j \in A$ and therefore $\alpha \in \partial
  A$. Since any nontrivial degree reducing set must contain $0$, it
  follows that $0 \not\in A^c$ and therefore an inductive application
  of the above process shows that any $q_\alpha$ must eventually be
  written as a linear combination of $q_\beta$, $\beta \in \partial A
  \cap \Gamma_{|\alpha|}$.
\end{proof}

\noindent
We recall the notion of a reduced polynomial. Given an inner product
$(\cdot,\cdot)$ on
$\Pi$, we call a polynomial $p$ \emph{reduced} if each homogeneous term
\[
p_j (x) := \sum_{|\alpha| = j} p_\alpha x^\alpha, \qquad
j=0,\dots,\deg p,
\]
of $p$ is perpendicular to the homogeneous leading forms in $\Lambda
(I_X) \cap \Pi_j^0$, where $\Lambda (p) := p_{\deg p} \in \Pi_{\deg p}^0$.
As shown in
\citep{Sauer01} that whenever $H$ is an H--basis for $I_X$ there
exists, for any polynomial $p \in \Pi$, a decomposition
\begin{equation}
  \label{eq:HDecomp}
  p = \sum_{h \in H} q_h \, h + r, \qquad \deg q_h + \deg h \le \deg p,  
\end{equation}
such that $r$ is reduced and depends only on $I_X$ and
$(\cdot,\cdot)$ and is zero if and only if $p \in I_X$. Therefore, $r$
can be seen as a well defined mapping $r : \Pi \to \Pi$. Also note that
\eqref{eq:HDecomp} is the multivariate analog of \emph{euclidean
  division} or \emph{division with remainder} and that $r$ is the
natural interpolant of $p$.

\begin{lemma}\label{L:MonHBasReduced}
  If $A \subset \Gamma$ is a finite set such that $\Pi_A$ is a degree
  reducing interpolation space for $X$ then there exists an inner
  product $(\cdot,\cdot)$ such that $\Pi_A = r(\Pi)$.
\end{lemma}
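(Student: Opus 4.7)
The plan is to construct an inner product explicitly from a direct sum decomposition of each homogeneous space. The key structural fact I will need is that for each $j$,
\[
\Pi_j^0 = \Pi_{A \cap \Gamma_j^0} \oplus \left( \Lambda(I_X) \cap \Pi_j^0 \right),
\]
so that $\Pi_{A \cap \Gamma_j^0}$ is \emph{a} complement to the leading form ideal in degree $j$. Given this decomposition, I will pick any inner products on the two summands, declare them orthogonal, and then make distinct homogeneous degrees orthogonal as well. With such an inner product, the reduced polynomials are by definition those whose degree-$j$ part is perpendicular to $\Lambda(I_X) \cap \Pi_j^0$, i.e., those lying in $\Pi_{A \cap \Gamma_j^0}$ for every $j$, which is exactly $\Pi_A$.

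The main work is to establish the direct sum decomposition. By Lemma~\ref{L:InterpolDuality}, the polynomials $q_\alpha = (\cdot)^\alpha - L_A (\cdot)^\alpha$, $\alpha \in \partial A$, form an H-basis of $I_X$, so $\Lambda(I_X)$ is generated by their leading forms. To show $\Pi_j^0 = \Pi_{A \cap \Gamma_j^0} + \Lambda(I_X) \cap \Pi_j^0$, I will take any monomial $x^\alpha$ with $|\alpha| = j$: if $\alpha \in A$, it lies in the first summand, while if $\alpha \notin A$, I split $L_A x^\alpha = p_j + (\text{lower})$ with $p_j \in \Pi_{A \cap \Gamma_j^0}$, and check that $x^\alpha - p_j$ is the leading form of $q = x^\alpha - L_A x^\alpha \in I_X$, hence lies in the second summand. (The degenerate case in which $q$ has degree strictly less than $j$ would force $x^\alpha \in \Pi_A$, contradicting $\alpha \notin A$.)

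For the direct sum, I will assume $v \in \Pi_{A \cap \Gamma_j^0} \cap \Lambda(I_X)$ and pick $q \in I_X$ with $\Lambda(q) = v$, so $q = v + r$ with $\deg r < j$. Applying the interpolation operator, $L_A q = 0$ since $q$ vanishes on $X$, while $L_A v = v$ because $v \in \Pi_A$; hence $v = -L_A r \in \Pi_A$ has degree at most $\deg r < j$. Since $v$ is homogeneous of degree $j$, this forces $v = 0$. The direct sum decomposition follows.

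With the decomposition in hand, the construction of the inner product is routine: on each finite dimensional space $\Pi_j^0$, express every element uniquely as $v + w$ with $v \in \Pi_{A \cap \Gamma_j^0}$ and $w \in \Lambda(I_X) \cap \Pi_j^0$, and set
\[
\langle v+w, v'+w' \rangle_j := \langle v, v' \rangle_j^V + \langle w, w' \rangle_j^W
\]
for arbitrary inner products on the two summands. Summing these over $j$ gives an inner product on $\Pi$ under which $\Pi_{A \cap \Gamma_j^0} = (\Lambda(I_X) \cap \Pi_j^0)^\perp$ inside $\Pi_j^0$. The set of reduced polynomials is then precisely $\bigoplus_j \Pi_{A \cap \Gamma_j^0} = \Pi_A$, as claimed. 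I expect the main obstacle to be the direct sum argument, in particular verifying that the leading forms of the H-basis elements combined with $\Pi_{A \cap \Gamma_j^0}$ genuinely exhaust $\Pi_j^0$; the rest reduces to a clean bookkeeping exercise in graded inner products.
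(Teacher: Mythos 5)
Your proof is correct and follows essentially the same strategy as the paper: construct the inner product degree by degree so that the monomials $x^\alpha$, $\alpha\in A\cap\Gamma_j^0$, are orthogonal to $\Lambda(I_X)\cap\Pi_j^0$, using the leading forms of the polynomials $q_\alpha=(\cdot)^\alpha-L_A(\cdot)^\alpha$ to identify the latter space. The only substantive difference is stylistic: the paper builds a concrete inner product via the Gramian $G=YY^H$ of the coefficient vectors of $\{x^\alpha:\alpha\in A\cap\Gamma_j^0\}\cup\{\Lambda(q_\alpha):\alpha\in A^c\cap\Gamma_j^0\}$ and leaves the direct-sum decomposition $\Pi_j^0=\Pi_{A\cap\Gamma_j^0}\oplus(\Lambda(I_X)\cap\Pi_j^0)$ implicit (via a dimension count), whereas you prove the decomposition explicitly and then note that any pair of inner products on the two summands will do; your intersection argument (that $L_A$ kills $q$ and fixes $v$, forcing $\deg v<j$) is a clean replacement for the paper's cardinality bookkeeping.
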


\begin{proof}
  We use the H--basis $q_\alpha$, $\alpha \in A^c$, defined in
  \eqref{eq:LInterpolDualityPf1} and define the inner product
  separately on $\Pi_n^0 \times \Pi_n^0$, $n \in \{ 0,1,\dots \}$. If
  $n < \min
  \{ |\alpha| : \alpha \in A^c \}$ and $n > \deg A$, we simply use the
  inner product of the coefficients,
  \[
  (p,p')_n := \sum_{|\alpha| = n} \overline{p_\alpha} p_\alpha', \qquad p,p' \in
  \Pi_n^0.
  \]
  For other values of $n$ we first observe that the polynomials
  $x^\alpha$, $\alpha \in A \cap \Gamma_n^0$ and the leading forms
  $\Lambda (q_\alpha)$, $\alpha \in A^c \cap \Gamma_n^0$, span
  $\Pi_n^0$. We arrange the coefficient vectors into a nonsingular matrix
  $Y \in \CC^{r_n^0 \times r_n^0}$ where $r_n^0 := \dim \Pi_n^0 =
  {n+s-1 \choose s-1}$ and note that the Gramian $G := Y Y^H$ is hermitian
  and positive definite. Defining
  \[
  (p,p')_n = p^H G^{-1} p = \sum_{|\alpha| = |\beta| = n} ( G^{-1}
  )_{\alpha,\beta} p_\alpha p_\beta', \qquad p,p' \in
  \Pi_n^0,
  \]
  we get that
  \[
  (Y,Y)_n = Y^H G^{-1} Y = Y^H ( Y Y^H )^{-1} Y = I
  \]
  which means that the vectors $e_\alpha$, $\alpha \in A \cap
  \Gamma_n^0$ are perpendicular to the coefficient vectors of $\Lambda
  (q_\alpha)$, $\alpha \in A^c \cap \Gamma_n^0$. Consequently, the
  inner product
  \[
  (p,p') = \sum_{j \in \NN_0} ( p_j,p_j' )_j, \qquad p,p' \in \Pi,
  \]
  has the property that a polynomial is reduced if and only if it
  belongs to $\Pi_A$, that is, $\Pi_A = r(\Pi)$ as claimed.
\end{proof}

\section*{Acknowledgement}
I want to thank the referees and the editor for their careful reading and
helpful suggestions and references.

\section*{References}


\end{document}